\newtheorem{theorem}{Theorem}[section]
\newtheorem{lemma}[theorem]{Lemma}
\newtheorem{proposition}[theorem]{Proposition}
\theoremstyle{definition}
\newtheorem{conjecture}[theorem]{Conjecture}
\newtheorem{question}[theorem]{Question}
\numberwithin{equation}{section}
\newcommand\eps{\varepsilon}
\newcommand\R{\mathbb{R}}
\newcommand\Z{\mathbb{Z}}
\newcommand\C{\mathbb{C}}
\newcommand{\wh}{\widehat}
\newcommand{\dd}{\mathrm{d}}
\begin{document}
\title{Additive energies of subsets of discrete cubes}

\author[Shao]{Xuancheng Shao}
\address{Department of Mathematics, University of Kentucky\\
715 Patterson Office Tower\\
Lexington, KY 40506\\
USA}
\email{xuancheng.shao@uky.edu}
\thanks{XS was supported by NSF grant DMS-2200565. Thanks to Ali Alsetri for pointing out the reference \cite{Mazur} and to Andrew Granville for helpful discussions.}

%\date{\today}

\maketitle
%\tableofcontents

\begin{abstract}
For a positive integer $n \geq 2$, define $t_n$ to be the smallest number such that the additive energy $E(A)$ of any subset $A \subset \{0,1,\cdots,n-1\}^d$ and any $d$ is at most $|A|^{t_n}$. Trivially we have $t_n \leq 3$ and 
$$ t_n \geq 3 - \log_n\frac{3n^3}{2n^3+n} $$
by considering $A = \{0,1,\cdots,n-1\}^d$. In this note, we investigate the behavior of $t_n$ for large $n$ and obtain the following non-trivial bounds:
$$ 3 - (1+o_{n\rightarrow\infty}(1)) \log_n \frac{3\sqrt{3}}{4} \leq t_n \leq 3 - \log_n(1+c), $$
where $c>0$ is an absolute constant.
\end{abstract}

\section{Introduction}

Let $A \subset G$ be a finite subset of an abelian group $G$. The additive energy $E(A)$ of $A$ is defined to be the number of additive quadruples in $A$:
$$ E(A) = \#\{(a_1,a_2,a_3,a_4) \in A^4: a_1+a_2=a_3+a_4\}. $$
Trivially we have $|A|^2 \leq E(A) \leq |A|^3$. A central theme in additive combinatorics is to understand the structure of those sets $A$ whose additive energy $E(A)$ is close to its trivial upper bound $|A|^3$. The famous Balog-Szemeredi-Gowers theorem and Freiman's theorem are both results in this direction. See \cite{TaoVu} for precise statements of these results and their proofs.

In this paper we study upper bounds for $E(A)$ when $A$ lies in certain subsets of $\Z^d$ for potentially large $d$. 
For a positive integer $n \geq 2$, define $t_n$ to be the smallest number such that $E(A) \leq |A|^{t_n}$ for all subsets $A \subset \{0,1,\cdots,n-1\}^d$ and all positive integers $d$.
One can calculate that
\begin{equation*}
\begin{split} 
E(\{0,1,\cdots,n-1\}) &= \sum_{s \in \Z} |\{(a,b): s=a+b, 0 \leq a,b\leq n-1\}|^2 \\
&= 1^2 + 2^2 + \cdots + n^2 + (n-1)^2 + \cdots + 1^2 = \frac{2n^3+n}{3}
\end{split}
\end{equation*}
and that
$$
E(\{0,1,\cdots,n-1\}^d) = E(\{0,1,\cdots,n-1\})^d = \left(\frac{2n^3+n}{3}\right)^d. 
$$
Thus we have the trivial bounds
\begin{equation}\label{trivial-bounds}
3 \geq t_n \geq \log_n \frac{2n^3+n}{3} = 3 - \log_n \frac{3n^3}{2n^3+n}.
\end{equation}
 It is known \cite[Theorem 7]{KaneTao} that $t_2 = \log_26$ so that the lower bound in \eqref{trivial-bounds} for $t_2$ is sharp. For $n = 3$, it was proved in \cite{DGIM} that
$$ t_3 \geq 2\log_2 2.5664 \geq 2.71949. $$
See \cite[Proposition 6]{DGIM} and its proof in \cite[Section 4.3]{DGIM}. In particular, this implies that the trivial lower bound $t_3 \geq \log_319 \approx 2.68$ in \eqref{trivial-bounds} is not sharp. Our main goal is to explore the behavior of $t_n$ for large $n$. 

\begin{theorem}\label{main-thm}
Let $n \geq 2$ be a positive integer. Then for some absolute constant $c>0$ we have
$$ 3 - (1+o_{n\rightarrow\infty}(1)) \log_n \frac{3\sqrt{3}}{4} \leq t_n \leq 3 - \log_n (1+c), $$
where $o_{n\rightarrow\infty}(1)$ denotes a quantity that tends to $0$ as $n\rightarrow\infty$.
\end{theorem}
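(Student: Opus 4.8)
The two bounds are of quite different natures, so I would treat them separately.

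For the \textbf{upper bound} $t_n \le 3 - \log_n(1+c)$, the point is to gain a constant-factor saving over the trivial bound $E(A) \le |A|^3$ that is \emph{uniform in $d$}. Write $A \subset \{0,1,\dots,n-1\}^d$ and use the Fourier-analytic identity $E(A) = \int_{\T^d} |\widehat{1_A}(\theta)|^4 \, \dd\theta$, or equivalently work with the additive convolution $r(x) = \#\{(a,a') \in A^2 : a - a' = x\}$, so that $E(A) = \sum_x r(x)^2$ and $\sum_x r(x) = |A|^2$. The trivial bound comes from $r(x) \le r(0) = |A|$. To improve it, I would exploit that the difference set $A - A$ lives in $\{-(n-1),\dots,n-1\}^d$, which has $(2n-1)^d$ points, and that the ``heavy'' part of $r$ cannot be too concentrated. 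A clean way: apply Cauchy--Schwarz coordinatewise, or better, pass to a single coordinate. By a tensor-power/projection trick, if one can show $E(A) \le |A|^{3-\eps}$ for all $A$ in \emph{one} dimension with a loss depending only on $|A|$ being large, that does not immediately tensorize; instead I would look for a genuinely dimension-free inequality. The natural candidate is: for any $x \ne 0$ in $A - A$, a positive proportion of coordinates of $x$ are nonzero, hence $r(x) \le |A| \cdot (\text{something} < 1)^{(\text{number of nonzero coords})}$ is false in general — so this must be replaced by an averaged statement. I would instead bound $\sum_{x \ne 0} r(x)^2$ by splitting according to $\|x\|_0$ (the number of nonzero coordinates) and using that in each nonzero coordinate the one-dimensional difference function has $\ell^2$ mass bounded away from the square of its $\ell^1$ mass; the constant $c$ then comes from the $n=2$ case embedded coordinatewise. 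Concretely, I expect the argument to reduce to the statement that restricting to a single coordinate loses a definite factor, combined with a convexity/entropy argument to globalize it; this is the step I expect to be the \textbf{main obstacle}, since naive tensorization gives nothing.

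For the \textbf{lower bound} $t_n \ge 3 - (1+o(1))\log_n \frac{3\sqrt 3}{4}$, the strategy is to exhibit, for each $n$, a set $A \subset \{0,1,\dots,n-1\}^d$ (for suitable $d = d(n)$) with $E(A) \ge |A|^{3 - (1+o(1))\log_n(3\sqrt3/4)}$, i.e. with $E(A) \ge |A|^3 / (3\sqrt3/4 + o(1))^{d}$ when $|A| = n^{d(1+o(1))}$ — more precisely we want $E(A)/|A|^3$ to decay like $(4/(3\sqrt3))^{d}$ while $|A|$ grows like $n^d$. Since $E$ and $|A|$ are both multiplicative over product sets, it suffices to find a \emph{single} set $B \subset \{0,1,\dots,n-1\}^k$ in some fixed small dimension $k$ (most likely $k=1$) with $E(B)/|B|^3$ as large as possible relative to $|B|^{-\log_n(3\sqrt3/4)}$, and then take $A = B^{d/k}$. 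With $k = 1$ and $B$ an arithmetic-progression-like set this recovers only the trivial bound, so the improvement must come from choosing $B \subsetneq \{0,\dots,n-1\}$ cleverly: take $B$ to be an interval of length $m < n$, or a Sidon-complement type set, optimizing $E(B) = (2m^3+m)/3$ against $|B| = m$ raised to the exponent forced by the normalization $|A| = n^d$ — but $|A| = m^d \ne n^d$, so one pays $\log_n m < 1$ in the exponent. Optimizing $\frac{\log((2m^3+m)/3)}{\log m}$ is maximized as $m \to \infty$ and gives nothing new; the factor $3\sqrt 3/4$ strongly suggests instead taking $B$ to be a \emph{two}-dimensional gadget, namely (a scaled copy of) $\{0,1,\dots,m-1\}^2$ intersected with a half-plane or a triangle $\{(a,b): a+b < m\}$, whose additive energy one computes exactly and whose size is $\binom{m+1}{2} \approx m^2/2$ — the ratio $E/|B|^3$ for the triangle is governed by a constant converging to $3\sqrt3/4$ after optimization. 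So the plan is: (i) define $B_m = \{(a,b) \in \Z_{\ge 0}^2 : a + b \le m-1\}$ viewed inside $\{0,\dots,m-1\}^2 \subset \{0,\dots,n-1\}^2$ for $m \le n$; (ii) compute $E(B_m)$ and $|B_m|$ exactly (a routine lattice-point count — the number of solutions to $a_1+a_2=a_3+a_4$ inside the triangle); (iii) take $A = B_n^d$, compute $t_n \ge \log_n E(A) / \log_n |A| = \log_n E(B_n)/\log_n|B_n|$, and simplify to extract the constant $\frac{3\sqrt3}{4}$ with the $o(1)$ error absorbing lower-order terms in $n$.

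The main obstacle on the lower-bound side is \textbf{identifying the right extremal gadget} and verifying the constant is exactly $3\sqrt 3/4$ — in particular checking that a triangle (or whatever the optimizer turns out to be) beats all boxes and that no iterated construction does better, which amounts to a small optimization problem; and on the upper-bound side the obstacle is, as noted, producing a \emph{dimension-independent} constant saving, for which I would lean on the $n=2$ result $t_2 = \log_2 6 < 3$ together with a coordinate-projection argument to transfer the saving to general $n$ and general $d$.
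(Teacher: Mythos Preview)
Both halves of your plan miss the reduction that drives the paper's proof, and each proposed strategy has a concrete obstruction.

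The paper's engine is Proposition~\ref{prop:tool} (from \cite{DGIM}): $t_n=4/q_n$, where $q_n$ is the largest $q$ with $\|\wh f\|_4\le\|f\|_q$ for every $f:\Z\to\R$ supported on an interval of length $n$. This turns the high-dimensional set problem into a one-dimensional inequality for \emph{functions}; neither of your arguments uses it.

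\textbf{Lower bound.} Your plan is to exhibit an explicit gadget and you guess a two-dimensional triangle. This cannot yield the constant $\tfrac{3\sqrt3}{4}$: that number is precisely the reciprocal of the sharp Babenko--Beckner constant in $\|\wh g\|_4\le(4\sqrt3/9)^{1/4}\|g\|_{4/3}$ on $\R$, whose only extremizers are Gaussians. Any fixed finite-dimensional convex body $K$ gives $E(K)/|K|^3$ strictly below $(4\sqrt3/9)^{\dim K}$, and tensor powers of $K$ preserve this per-dimension deficit; the value $4\sqrt3/9$ is only reached in the $d\to\infty$ limit (e.g.\ by balls, as the paper notes). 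The paper instead works at the function level: it takes $f(m)=e^{-m^2/A}$ truncated to $\{-k,\dots,k\}$, and the proof is a careful comparison of $\|\wh f\|_4/\|f\|_q$ with the exact continuous Gaussian identity. The paper explicitly remarks that this is \emph{not} constructive at the set level and poses finding such a set as an open question, so your route, even if the triangle were replaced by something better, would be solving more than is needed.

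\textbf{Upper bound.} Your fallback of transferring the saving from $t_2=\log_2 6$ via coordinate projection runs into the wrong monotonicity: since $\{0,\dots,m-1\}^d\subset\{0,\dots,n-1\}^d$ for $m\le n$, one has $t_m\le t_n$, so knowing $t_2<3$ only bounds $t_n$ from \emph{below}; and there is no additive embedding of $\{0,\dots,n-1\}$ into $\{0,1\}^k$ (carries obstruct it). Indeed $t_n\to 3$, so no $n$-uniform saving exists. The paper's argument is again via the function inequality: starting from Hausdorff--Young $\|\wh f\|_4\le\|f\|_{4/3}$, one must only handle the near-equality case $\|\wh f\|_4\ge(1-\delta)\|f\|_{4/3}$. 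A stability result for Young's inequality (Proposition~\ref{prop:HY}, a special case of \cite{CC,EisnerTao}) then forces $f$ to be $O(\delta^{1/8})$-close in $\ell^{4/3}$ to a single point mass; writing $f=f(x_0)\delta_{x_0}+g$ with $\|g\|_{4/3}$ small, a direct expansion of $\|f*f\|_2$ versus $\|f\|_q^2$ finishes. None of the coordinatewise splitting or entropy ideas you sketch enter.
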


Unfortunately the lower bound in Theorem \ref{main-thm} is only meaningful for $n$ sufficiently large. To complement that, we also prove the following result which is valid for every $n \geq 3$.

\begin{theorem}\label{main-thm2}
For any positive integer $n \geq 3$, we have
$$ t_n > \log_n E(\{0,1,\cdots,n-1\}) = \log_n \frac{2n^3+n}{3}. $$
\end{theorem}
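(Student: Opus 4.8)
The plan is to exhibit, for each $n\ge 3$, a single set that is strictly more ``energy-efficient'' than the full cube, namely a high-dimensional cube with one corner deleted. Throughout write $B=\{0,1,\dots,n-1\}$ and $\alpha=\log_n E(B)=\log_n\frac{2n^3+n}{3}$, so that $n^{\alpha}=E(B)$; note $\alpha\ge 2$ since $\frac{2n^3+n}{3}\ge n^2$ for $n\ge 1$. By the definition of $t_n$ we have $E(A)\le|A|^{t_n}$ for every $A\subseteq\{0,\dots,n-1\}^d$ and every $d$; hence it suffices to produce one such $A$ with $E(A)>|A|^{\alpha}$, because then $t_n\ge\frac{\log E(A)}{\log|A|}>\alpha$, which is exactly the claim. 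I would take $d$ large and set $A=B^{d}\setminus\{\mathbf 0\}$, where $\mathbf 0=(0,\dots,0)$, so that $|A|=n^{d}-1$.

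For the target $|A|^{\alpha}$, use $E(B^{d})=E(B)^{d}$ (recorded above) together with the elementary convexity inequality $(1-x)^{\alpha}\le 1-x$ for $\alpha\ge 1$ and $x\in[0,1]$, which gives
\[
|A|^{\alpha}=(n^{d}-1)^{\alpha}=E(B)^{d}(1-n^{-d})^{\alpha}=E(B^{d})(1-n^{-d})^{\alpha}\le E(B^{d})-n^{-d}E(B^{d}).
\]
For $E(A)$, write $E(A)=E(B^{d})-K$, where $K$ is the number of additive quadruples $(a_1,a_2,a_3,a_4)\in(B^{d})^{4}$ with $a_1+a_2=a_3+a_4$ in which some $a_i=\mathbf 0$. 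A union bound over the four positions gives $K\le 4M$, where $M$ counts such quadruples with $a_1=\mathbf 0$; but $a_1=\mathbf 0$ forces $a_2=a_3+a_4$, so $M=\#\{(x,y)\in(B^{d})^{2}:x+y\in B^{d}\}=\big(\#\{(u,v):0\le u,v\le n-1,\ u+v\le n-1\}\big)^{d}=\big(\tfrac{n(n+1)}{2}\big)^{d}$. Combining the two estimates, it is enough to have $K\le 4\big(\tfrac{n(n+1)}{2}\big)^{d}<n^{-d}E(B^{d})=\big(\tfrac{2n^{2}+1}{3}\big)^{d}$, i.e. $\big(\tfrac{4n^{2}+2}{3n(n+1)}\big)^{d}>4$.

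This is exactly where $n\ge 3$ enters: $4n^{2}+2-3n(n+1)=(n-1)(n-2)>0$ for $n\ge 3$, so the base exceeds $1$ and the inequality holds for all sufficiently large $d$; one then obtains $E(A)>|A|^{\alpha}$ and hence $t_n>\alpha$. (For $n=2$ the base equals $1$, consistent with $t_2=\log_2 6$ already meeting the trivial bound.) The only step needing any thought is the bound on $K$, and the crude union bound $K\le 4M$ is comfortably sufficient here, since for each fixed $n\ge 3$ the ratio of the two exponential bases is bounded away from $1$; the convexity estimate $(1-x)^{\alpha}\le 1-x$ introduces no loss. If one wishes, the same computation yields a quantitative gain $t_n\ge\alpha+c_n$ with an explicit $c_n>0$, although Theorem~\ref{main-thm2} only claims strict inequality.
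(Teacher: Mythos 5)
Your proof is correct, but it is a genuinely different argument from the paper's. The paper works entirely in the function-theoretic formulation of Proposition~\ref{prop:tool}: it takes the perturbed function $f = 1_I + \eps\delta_0$ on an interval $I$ of length $n$, and shows by a first-order expansion in $\eps$ that $\|f*f\|_2^2 > \|f\|_q^4$ when $q = 4/\log_n\frac{2n^3+n}{3}$, because the coefficient $3n^2$ arising from $\sum_{a\in I}1_I*1_I(a)$ beats the coefficient $\frac{4}{3}(2n^2+1)$ arising from the $\ell^q$-norm precisely when $n\ge 3$. You instead bypass Proposition~\ref{prop:tool} entirely and exhibit an explicit set $A = B^d\setminus\{\mathbf{0}\}$ in high dimension, comparing $E(A) = E(B^d) - K$ with $|A|^\alpha$ directly; your decisive inequality $4n^2+2 - 3n(n+1) = (n-1)(n-2) > 0$ plays the same role as the paper's $3n^2 > \frac{4}{3}(2n^2+1)$, which also reduces to $(n-1)(n-2)>0$ after clearing denominators, so the two arguments hinge on the same numerical coincidence but reach it by independent routes. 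Your version is more elementary and self-contained (no appeal to the DGIM machinery, no Fourier identity $E(X)=\|\widehat{1_X}\|_4^4$), and it has the virtue of giving an explicit witness set, thereby answering in this regime the Question raised after Theorem~\ref{main-thm}; the paper's version is shorter once Proposition~\ref{prop:tool} is available, stays within the framework used for the other bounds, and gives a cleaner quantitative statement for each fixed $n$. All the steps in your argument check out: $\alpha\ge 2\ge 1$ justifies $(1-x)^\alpha\le 1-x$, the union bound $K\le 4M$ is valid by the symmetry of the positions, $M = (n(n+1)/2)^d$ by multiplicativity, and the base $\frac{4n^2+2}{3n(n+1)}$ exceeds $1$ exactly for $n\ge 3$, so taking $d$ large finishes the proof.
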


A key tool for the proof of both theorems comes from \cite{DGIM}, which allows us to pass from studying subsets in $\Z^d$ to studying functions on $\Z$. In Section \ref{sec:outline} we will describe this tool, outline the proofs, and make some remarks on further directions. The lower bound and the upper bound in Theorem \ref{main-thm} will be proved in Sections \ref{sec:lower-bound} and  \ref{sec:upper-bound}, respectively. Theorem \ref{main-thm2} will be proved in Section \ref{sec:thm2}.

\section{Proof outline}\label{sec:outline}

For a finitely supported function $f: \Z \rightarrow \C$, we define its Fourier transform $\wh{f}: \R/\Z \rightarrow \C$ by the formula
$$ \wh{f}(\theta) = \sum_{a \in \Z} f(a) e(-a\theta), $$
where $e(x) = e^{2\pi ix}$. For $p,q \geq 1$, the $L^p$-norm of $\wh{f}$ and the $\ell^q$-norm of $f$ are defined by
$$ \|\wh{f}\|_p = \left(\int_0^1 |\wh{f}(\theta)|^p \dd\theta\right)^{1/p}, \ \ \|f\|_q = \left(\sum_{a \in \Z} |f(a)|^q\right)^{1/q}. $$
For two finitely supported functions  $f, g: \Z \rightarrow \C$, their convolution $f*g: \Z\rightarrow \C$ is defined by
$$ f*g(s) = \sum_{a \in \Z} f(a) g(s-a). $$
We have the identities
$$ \|\wh{f}\|_4^4 = \|f*f\|_2^2 = \sum_{a, b, c \in \Z} f(a) f(b) \overline{f(c) f(a+b-c)}. $$
Thus if $f = 1_A$ is the indicator function of a finite subset $A \subset \Z$, then 
$$ E(A) = \|1_A*1_A\|_2^2 = \|\wh{1_A}\|_4^4. $$
In Section \ref{sec:lower-bound} we will also need to utilize Fourier transforms of functions on $\R$. For a piecewise continuous function $g: \R\rightarrow \C$ which  has bounded support, we define its Fourier transform $\wh{g}: \R\rightarrow \C$ by the formula
$$ \wh{g}(y) = \int_{-\infty}^{+\infty} f(x) e(-xy) \dd x. $$
For two such functions $g,h$, we define their convolution $g*h: \R\rightarrow \C$ by 
$$ g*h(z) = \int_{-\infty}^{+\infty} g(x) h(z-x) \dd x. $$
We have the identities
$$ \|\wh{g}\|_4^4 = \|g*g\|_2^2 = \int\int\int g(x_1)g(x_2) \overline{g(x_3)g(x_1+x_2-x_3)} \dd x_1 \dd x_2 \dd x_3. $$

The machinery developed in \cite[Section 4]{DGIM} plays a key role in our proof. We summarize their result in the following proposition. Recall the definition of $t_n$ from the introduction.

\begin{proposition}\label{prop:tool}
Let $n \geq 2$ be a positive integer. We have $t_n = 4/q_n$, where $q_n$ is the largest value of $q$ such that the inequality $\|\wh{f}\|_4 \leq \|f\|_q$ holds for any function $f: \Z\rightarrow \R$ which is supported on an interval of length $n$.
\end{proposition}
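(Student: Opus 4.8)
The plan is to establish the two inequalities $t_n \le 4/q_n$ and $t_n \ge 4/q_n$ separately, both by unwinding the definitions and exploiting the tensor power trick, which is the mechanism that makes the passage from subsets of $\Z^d$ to functions on $\Z$ possible.

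For the direction $t_n \le 4/q_n$: suppose $\|\wh f\|_4 \le \|f\|_{q_n}$ holds for every $f \colon \Z \to \R$ supported on an interval of length $n$. Given a subset $A \subset \{0,1,\dots,n-1\}^d$, I want to deduce $E(A) \le |A|^{4/q_n}$. The key observation is that, writing $\{0,1,\dots,n-1\}^d$ as a genuine arithmetic structure, one can identify it (via the base-$n$ map $(a_1,\dots,a_d) \mapsto a_1 + a_2 N + \cdots + a_d N^{d-1}$ for a large modulus $N$, or more cleanly by working directly with convolutions on $\Z^d$) with a $d$-fold product setting. Concretely, let $f \colon \Z^d \to \R$ be $1_A$ and note $E(A) = \|\wh{1_A}\|_4^4$ computed with the $d$-dimensional Fourier transform. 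The functions on $\Z^d$ supported on $\{0,1,\dots,n-1\}^d$ are spanned by tensor products of functions on $\Z$ supported on intervals of length $n$, and both $\|\wh{\cdot}\|_4^4$ and $\|\cdot\|_q^q$ tensorize: $\|\wh{g_1 \otimes \cdots \otimes g_d}\|_4 = \prod_i \|\wh{g_i}\|_4$ and similarly for the $\ell^q$-norm. The subtlety is that $1_A$ is not itself a tensor product, so one cannot simply multiply the one-dimensional inequality; instead one applies the one-dimensional inequality \emph{in each coordinate successively}, peeling off one variable at a time and using that the intermediate functions, though no longer indicator functions, are still real-valued and supported on intervals of length $n$ in the active coordinate. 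After $d$ steps this yields $\|\wh{1_A}\|_4 \le \|1_A\|_{q_n} = |A|^{1/q_n}$, hence $E(A) \le |A|^{4/q_n}$, giving $t_n \le 4/q_n$.

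For the reverse direction $t_n \ge 4/q_n$: here I need to show that if $\|\wh f\|_4 \le \|f\|_q$ fails for some $q > q_n$ — equivalently, for $q$ slightly larger than $q_n$ there is a real function $f$ on an interval of length $n$ with $\|\wh f\|_4 > \|f\|_q$ — then $E(A) \le |A|^{4/q}$ cannot hold for all $A$, so $t_n \ge 4/q$. The standard route is again tensoring: from a single bad $f$ on $\Z$ one forms $f^{\otimes d}$ on $\Z^d$, which satisfies $\|\wh{f^{\otimes d}}\|_4^4 / \|f^{\otimes d}\|_q^{4} = (\|\wh f\|_4^4 / \|f\|_q^4)^d$, a ratio growing exponentially in $d$. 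One then has to discretize: replace the real function $f^{\otimes d}$ by an actual subset $A$. This is done by approximating $f$ by a nonnegative rational-valued (after scaling, integer-valued) function and replacing each point $a$ carrying weight $m_a \in \Z_{\ge 0}$ by $m_a$ copies in a fresh set of auxiliary coordinates — i.e.\ embedding $\Z^d$ into a larger $\Z^{d'}$ so that a weighted point set becomes an honest $0/1$-set, with additive energy and cardinality matching the weighted quantities $\|f*f\|_2^2$ and $\|f\|_1$. One must check that $\|\wh f\|_4 > \|f\|_q$ can be arranged with $f \ge 0$ integer-valued (replacing $f$ by $|f|$ only increases $\|\wh f\|_4$ since $|\wh f| \le \wh{|f|}$ pointwise, and $\|f\|_q = \||f|\|_q$, so WLOG $f \ge 0$; then clear denominators). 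Letting $d \to \infty$ forces $t_n \ge 4/q$ for every $q > q_n$, hence $t_n \ge 4/q_n$.

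The main obstacle I anticipate is the first direction — specifically, justifying the coordinate-by-coordinate application of the one-dimensional inequality to the non-product function $1_A$, and confirming that the intermediate functions remain admissible (real-valued, supported in length-$n$ intervals per coordinate) so that the one-dimensional hypothesis applies at each stage. The tensorization of the norms is routine, and the discretization in the second direction is a standard but slightly fiddly bookkeeping argument; the genuinely delicate point is that $\|\wh{\cdot}\|_4$ behaves well under this iterated single-variable operation, which ultimately rests on Minkowski's integral inequality (equivalently, on the fact that $\|\wh f\|_4^4 = \|f * f\|_2^2$ and $L^2$ norms interact cleanly with iterated convolution in product spaces). Since this is precisely the content attributed to \cite[Section 4]{DGIM}, I would follow their argument closely rather than reconstruct it from scratch.
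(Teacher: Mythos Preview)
The paper's own proof is essentially a one-line citation: it identifies $q_n$ with the condition $\operatorname{DE}_{\ell^q\to L^4}(\{0,\dots,n-1\})\le 1$ and then invokes \cite[Proposition~21]{DGIM} for the equivalence with the additive-energy bound. Since you ultimately defer to \cite[Section~4]{DGIM} as well, your proposal matches the paper in spirit.

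That said, the sketch you give before deferring contains a couple of slips worth noting. First, the claim ``$|\wh f|\le \wh{|f|}$ pointwise'' is false (take $f(0)=1$, $f(1)=-1$ and evaluate at $\theta=1/2$); what is true, and what you need, is the norm inequality $\|\wh f\|_4\le \|\wh{|f|}\|_4$, which follows from the identity $\|\wh f\|_4^4=\sum_{a+b=c+d}f(a)f(b)\overline{f(c)f(d)}$ by taking absolute values termwise. Relatedly, in your coordinate-by-coordinate argument for $t_n\le 4/q_n$, the intermediate functions (after a partial Fourier transform) are complex-valued, not real-valued as you assert; this is harmless precisely because of the preceding norm inequality, but your stated reason is incorrect.

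More substantively, in the direction $t_n\ge 4/q_n$ your discretization has a gap. Replacing a point of weight $m_a$ by $m_a$ copies in auxiliary coordinates yields a set $A$ with $|A|=\|f\|_1$, so the comparison $E(A)$ versus $|A|^{4/q}$ becomes $\|\wh f\|_4$ versus $\|f\|_1^{1/q}$, not $\|f\|_q$ --- and your text does not connect these. Moreover, the assertion that one can arrange $E(A)=\|f*f\|_2^2$ exactly inside $\{0,\dots,n-1\}^D$ by such a stacking is not justified: for generic choices of the stacks $S_a$ the mixed energies $E(S_a,S_b;S_c,S_d)$ bear no simple relation to $m_am_bm_cm_d$, and the trivial bound $E(A)\le |A|^3$ can already obstruct the claimed equality. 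The actual argument in \cite{DGIM} handles this passage from functions to indicator sets by a more careful tensor-power/probabilistic construction; if you intend to reconstruct it rather than cite it, that step needs real work.
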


\begin{proof}
This is essentially \cite[Proposition 21]{DGIM}. First observe that, by translation, we may restrict to those functions $f: \Z\rightarrow\R$ supported on $A = \{0,1,\cdots,n-1\}$ in the definition of $q_n$. Then, in the language of \cite[Definition 14]{DGIM}, $q_n$ is the largest value of $q$ such that 
\begin{equation}\label{eq-DE} 
\operatorname{DE}_{\ell^q\rightarrow L^4}(A) \leq 1,
\end{equation}
where $\operatorname{DE}_{\ell^q\rightarrow L^4}(A)$ is the operator norm of the linear map $\ell^q(A) \rightarrow L^4(\R/\Z)$ defined by the Fourier transform $f \mapsto \wh{f}$.
By \cite[Proposition 21]{DGIM}, \eqref{eq-DE} is equivalent to the statement that an inequality of the form 
$$ E(X) \leq |X|^{4/q} $$
holds for all subsets $X \subset A^d$ and $d \geq 1$. It follows that $t_n = 4/q_n$ by the definition of $t_n$.
\end{proof}

We remark that, by the Hausdorff-Young inequality, we always have
$$ \|\wh{f}\|_4 \leq \|f\|_{4/3}. $$
Hence $q_n \geq 4/3$ and this recovers the trivial bound $t_n \leq 3$.
Moreover, the $\ell^{4/3}$-norm and the $\ell^q$-norm for $q > 4/3$ are related by the inequalities
\begin{equation}\label{eq:q-4/3} 
\|f\|_q \leq \|f\|_{4/3} \leq |\operatorname{supp} f|^{3/4-1/q} \cdot \|f\|_q, 
\end{equation}
where $|\operatorname{supp}f|$ denotes the size of the support of $f$. 

In view of Proposition \ref{prop:tool}, the lower and upper bounds in Theorem \ref{main-thm} follow from Propositions \ref{prop:lower-bound} and \ref{prop:upper-bound} below, respectively. In the remainder of this section, we discuss the main ideas behind the proofs of these two propositions and make some remarks about the quality of our bounds.

\subsection{Lower bound for $t_n$}

In view of Proposition \ref{prop:tool}, the lower bound for $t_n$ in Theorem \ref{main-thm} is equivalent to the following proposition.

\begin{proposition}\label{prop:lower-bound}
Let $\eps > 0$ and let $n$ be sufficiently large in terms of $\eps$. Let
$$ q = \frac{4}{3 - (1+\eps)\log_n\frac{3\sqrt{3}}{4}}. $$
There exists a function $f: \Z\rightarrow \R$ which is supported on an interval of length $n$, such that $\|\wh{f}\|_4 > \|f\|_q$. 
\end{proposition}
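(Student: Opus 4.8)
The plan is to exhibit an explicit test function $f$ supported on $\{0,1,\dots,n-1\}$ for which $\|\wh f\|_4 > \|f\|_q$ with the claimed $q$. The natural candidate is a discretization of a continuous function $g:\R\to\R$ that nearly optimizes the analogous continuous inequality $\|\wh g\|_4 \le C\,\|g\|_q$ on $\R$. Indeed, by a rescaling heuristic, if $f(a)$ is (roughly) $g(a/n)$ for $a\in\{0,\dots,n-1\}$, then $\|f\|_q \approx n^{1/q}\|g\|_q$ and, after the corresponding dilation on the Fourier side, $\|\wh f\|_4 \approx n^{3/4}\|\wh g\|_4$. So $\|\wh f\|_4 > \|f\|_q$ becomes, in the limit $n\to\infty$, the condition $n^{3/4-1/q} > \|g\|_q/\|\wh g\|_4$; taking logarithms base $n$ this is exactly $3/4-1/q > \log_n(\|g\|_q/\|\wh g\|_4)$, i.e. the threshold for $q$ is governed by the continuous ratio $\|\wh g\|_4/\|g\|_q$. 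The value $3\sqrt3/4$ in the statement should arise as (a power of) the extremal value of $\|\wh g\|_4/\|g\|_q$ over an appropriate family of $g$.

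Concretely, I would take $g = 1_{[0,1]}$, the indicator of the unit interval. Then $\wh g(y) = e(-y/2)\,\tfrac{\sin(\pi y)}{\pi y}$, and $\|\wh g\|_4^4 = \int_{-\infty}^{\infty}\bigl(\tfrac{\sin\pi y}{\pi y}\bigr)^4\dd y = \tfrac{2}{3}$, while $\|g\|_q = 1$ for every $q$. Equivalently, and more robustly for the discretization, one computes $\|g*g\|_2^2$ where $g*g$ is the triangle function on $[0,2]$: $\int_0^2 (1-|z-1|)^2\dd z = \tfrac23$. The key numeric input is therefore $\|\wh g\|_4 = (2/3)^{1/4}$, and one checks $(2/3)^{1/4}$ corresponds to the relevant constant: we will want $3/4 - 1/q = (1+\eps)\log_n\frac{3\sqrt3}{4}$ to be just below $\log_n\|\wh g\|_4^{-1}\cdot(\text{correction})$, so I expect to have to replace $1_{[0,1]}$ by a slightly better competitor, or to optimize over dilates $g_\lambda = 1_{[0,\lambda]}$ combined with the discretization losses, to extract the precise constant $\frac{3\sqrt3}{4}$; note $\bigl(\tfrac{3\sqrt3}{4}\bigr) = \bigl(\tfrac{2}{3}\bigr)^{-1}\cdot\sqrt3/2\cdot\cdots$, so tracking where the $\sqrt3$ enters is the bookkeeping to get right.

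The steps, in order: (1) fix a continuous $g$ with bounded support and compute $\|g\|_q$ and $\|\wh g\|_4^4 = \|g*g\|_2^2$ exactly; (2) define $f = f_n:\Z\to\R$ on $\{0,\dots,n-1\}$ by sampling/averaging $g$ at scale $1/n$, so that $f(a) = n\int_{a/n}^{(a+1)/n} g$ or simply $f(a)=g(a/n)$; (3) show $\|f\|_q^q = n\cdot\|g\|_q^q(1+o(1))$ by Riemann-sum convergence; (4) relate $\|\wh f\|_4^4 = \|f*f\|_2^2$ to $n^3\|g*g\|_2^2$: writing $f*f(s) = \sum_a f(a)f(s-a)$ as an approximate Riemann sum for $n^2\int g(x)g(s/n - x)\dd x = n\cdot(g*g)(s/n)$, one gets $\|f*f\|_2^2 = \sum_s |f*f(s)|^2 \approx n^2\sum_s |(g*g)(s/n)|^2 \approx n^3\|g*g\|_2^2$; (5) combine: $\|\wh f\|_4/\|f\|_q \approx n^{3/4-1/q}\cdot\|\wh g\|_4/\|g\|_q$, which exceeds $1$ precisely when $3/4 - 1/q > \log_n(\|g\|_q/\|\wh g\|_4)$, i.e. when $q$ is at most the stated value with room $\eps$ to absorb the $o(1)$ errors; (6) conclude for $n$ large in terms of $\eps$.

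The main obstacle is step (4): turning the convolution estimate into a genuine inequality with controlled error. The sum $f*f(s)$ is supported on $O(n)$ values of $s$, each of size $O(n)$, so $\|f*f\|_2^2$ is of order $n^3$ as needed, but one must ensure the approximation $f*f(s)\approx n(g*g)(s/n)$ holds uniformly enough (in $\ell^2$, not just pointwise) — boundary effects near $s=0$ and $s=2n$, and the smoothness of $g*g$, matter here. Choosing $g$ to be a nice piecewise-polynomial (e.g. an indicator or a tent function) makes $g*g$ Lipschitz, so $|f*f(s) - n(g*g)(s/n)| = O(1)$ pointwise, giving $\|f*f\|_2^2 = n^3\|g*g\|_2^2 + O(n^2)$, which is ample. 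The only real subtlety is then verifying that the extremal constant coming out of the optimal choice of $g$ is exactly $\frac{3\sqrt3}{4}$ rather than something slightly different; I would verify this by a direct one-parameter optimization over scaled indicators or over $g = 1_{[0,1]} * (\text{short kernel})$, and if $1_{[0,1]}$ alone does not suffice, identify the maximizer of $\|\wh g\|_4/\|g\|_q$ in closed form for the relevant $q$ near $8/9$.
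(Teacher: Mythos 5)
The overall plan---take a continuous $g$ that nearly saturates a continuous $L^4$ restriction inequality, discretize by Riemann sums, and pick up a factor $n^{3/4 - 1/q}$ from the rescaling---is exactly the architecture of the paper's proof. Your steps (2)--(6) are close in spirit to Lemmas~\ref{lem:fg} and~\ref{lem:f-approx-gM}, which carry out the Riemann-sum comparison with explicit error control. However, there is a genuine gap in step (1): the choice of $g$ is the crux of the proof, and $g = 1_{[0,1]}$ (or a tent function, or any piecewise-polynomial) is the wrong choice.

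The constant $\tfrac{3\sqrt{3}}{4}$ in the proposition is $\bigl(\tfrac{4\sqrt{3}}{9}\bigr)^{-1}$, and $\tfrac{4\sqrt{3}}{9}$ is precisely the sharp constant in the Babenko--Beckner inequality on $\R$: $\|\wh{g}\|_4 \leq \bigl(\tfrac{4\sqrt{3}}{9}\bigr)^{1/4}\|g\|_{4/3}$, with equality exactly for Gaussians $g(x) = e^{-cx^2}$. Following your own heuristic, the threshold $q$ you can reach with a given $g$ is governed by $\log_n\bigl(\|g\|_{4/3}/\|\wh{g}\|_4\bigr)$, and this quantity is \emph{minimized} (hence the lower bound on $t_n$ is \emph{maximized}) when $g$ is a Gaussian. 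For $g = 1_{[0,1]}$ you compute $\|\wh g\|_4^4 = 2/3$, but $2/3 < \tfrac{4\sqrt{3}}{9} \approx 0.77$, so the ratio is strictly suboptimal; plugging in $1_{[0,1]}$ only gets you to $q = 4/\bigl(3 - (1+\eps)\log_n\tfrac{3}{2}\bigr)$, which (since $1_{[0,1]}$ discretizes to $1_{\{0,\dots,n-1\}}$) merely recovers the trivial lower bound $t_n \geq \log_n\tfrac{2n^3+n}{3}$ from~\eqref{trivial-bounds}. You suspect this issue yourself (``I expect to have to replace $1_{[0,1]}$ by a slightly better competitor'') but propose looking among dilated indicators and smoothed indicators, none of which can reach the Babenko--Beckner constant. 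The missing idea is to take $g(x) = e^{-x^2/A}$ with $A$ slightly less than $n^2$ (the paper uses $A = k^{2-\eps/10}$ with $n = 2k+1$), truncate it to an interval of length $n$, and sample at integers; the rest of your Riemann-sum argument then goes through as in Lemmas~\ref{lem:gM-approx-g}--\ref{lem:f-approx-gM}.

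A minor slip: you write ``for the relevant $q$ near $8/9$,'' but $q$ tends to $4/3$ as $n\to\infty$.
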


Our motivation for the construction of $f$ in Proposition \ref{prop:lower-bound} comes from the Babenko-Beckner inequality \cite{Babenko, Beckner}, a sharpened form of the Hausdorff-Young inequality for functions on $\R$ (and more generally on $\R^d$). It asserts that for any function $g: \R\rightarrow \R$ we have
\begin{equation}\label{Beckner} 
\|\wh{g}\|_4 \leq \left(\frac{4\sqrt{3}}{9}\right)^{1/4} \|g\|_{4/3}. 
\end{equation}
Moreover, equality is achieved when $g$ is the Gaussian function $g(x) = e^{-x^2}$. In other words, Gaussian functions (and similarly their dilated versions) maximize the $\wh{L}_4$-norm, if we hold the $\ell^{4/3}$-norm fixed. If we take $g(x) = e^{-x^2/A}$ with $A \approx n^2$ (so that $g$ is essentially supported on an interval of length $\approx n$), then direct computations show that
$$ \frac{\|g\|_{4/3}}{\|g\|_q} = c A^{\frac{1}{2}(\frac{3}{4}-\frac{1}{q})}, $$
where $c$ is an explicit constant depending on $q$ and $c\approx 1$ when $q \approx 4/3$. By our choice of $A$ and $q$, we have
$$ A^{\frac{1}{2}(\frac{3}{4}-\frac{1}{q})} \approx n^{\frac{3}{4}-\frac{1}{q}} = n^{\frac{1}{4}(1+\eps)\log_n\frac{3\sqrt{3}}{4}} \approx \left(\frac{3\sqrt{3}}{4}+c\right)^{1/4} $$
for some constant $c=c(\eps) > 0$.
Hence this function $g(x)$ satisfies 
$$ \|\wh{g}\|_4 = \left(\frac{4\sqrt{3}}{9}\right)^{1/4} \|g\|_{4/3} \approx \left(\frac{4\sqrt{3}}{9}\right)^{1/4}  \left(\frac{3\sqrt{3}}{4} + c\right)^{1/4}\|g\|_q > \|g\|_q. $$
If we define $f: \Z\rightarrow \R$ by sampling the values of $g(x)$ at integral points, then we may expect that
$$ \|\wh{f}\|_4 \approx \|\wh{g}\|_4, \ \ \|f\|_q \approx \|g\|_q, $$
and thus we should also have $\|\wh{f}\|_4 > \|f\|_q$. The details are worked out in Section \ref{sec:lower-bound}.

\subsection{Upper bound for $t_n$}

In view of Proposition \ref{prop:tool}, the upper bound for $t_n$ in Theorem \ref{main-thm} is equivalent to the following proposition.

\begin{proposition}\label{prop:upper-bound}
Let $n \geq 2$ be a positive integer and let $f: \Z\rightarrow \R$ be a function which is supported on a set of size $n$. Let
$$  q = \frac{4}{3 - \log_n(1+c)} $$
for some sufficiently small absolute constant $c > 0$. Then $\|\wh{f}\|_4 \leq \|f\|_q$.
\end{proposition}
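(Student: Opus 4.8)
The plan is to establish a discrete Hausdorff–Young-type inequality with a constant strictly below $1$, by exploiting the fact that for a function supported on a set of size exactly $n$, one never sits at the extremal configuration of the bare Hausdorff–Young inequality $\|\wh f\|_4 \le \|f\|_{4/3}$. Concretely, I would combine three ingredients: (i) the identity $\|\wh f\|_4^4 = \|f*f\|_2^2$, which turns the problem into bounding an $\ell^2$-norm of a convolution; (ii) Young's convolution inequality in the sharp-exponent but \emph{not} sharp-constant regime, namely $\|f*f\|_2 \le \|f\|_{4/3}^2$, which is the exact integrated form of Hausdorff–Young here; and (iii) a gain coming from the interpolation inequality \eqref{eq:q-4/3}, $\|f\|_{4/3} \le n^{3/4-1/q}\|f\|_q$, together with the observation that $n^{3/4-1/q} = (1+c)^{1/4}$ for the stated value of $q$. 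Chaining these gives $\|\wh f\|_4 \le \|f\|_{4/3} \le (1+c)^{1/4}\|f\|_q$, which is the \emph{wrong} direction — it is weaker than what we want by a factor $(1+c)^{1/4}$. So the real content must be a genuine improvement over Hausdorff–Young: one has to show $\|\wh f\|_4 \le (1+c)^{-1/4}\|f\|_{4/3}$ for every $f$ supported on a set of size $n$, with $c>0$ absolute and independent of $n$.

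To get that improvement I would argue as follows. Write $g = f*f$, supported on a set of size at most $2n-1$, and recall $\|\wh f\|_4^4 = \|g\|_2^2$ while $\|f\|_{4/3}^4 = \|f\|_{4/3}^4$. The sharp constant in the discrete Young inequality $\|f*f\|_2 \le C_n \|f\|_{4/3}^2$ — minimized over all $f$ supported on $n$ points — is what we must control; the claim is $C_n \le (1+c)^{-1/4}$. Equality in the \emph{continuous} Babenko–Beckner bound \eqref{Beckner} forces $g$ to be Gaussian, hence spread over all of $\R$; any genuinely finitely-supported $f$ is bounded away from that extremizer. I would make this quantitative by a compactness/stability argument for small $n$ (say $n\le n_0$ for a suitable absolute $n_0$), where the space of normalized $f$ on $n$ points is finite-dimensional and one checks $\|\wh f\|_4 < \|f\|_{4/3}$ strictly, extracting a uniform constant; and for $n > n_0$ by a direct argument — for instance, rescaling and comparing $f$ with the continuous function it interpolates, then invoking \eqref{Beckner} for the continuous object and quantifying the deficit in the Hausdorff–Young inequality in terms of how far $f$'s Fourier transform is from a dilated Gaussian restricted to $\R/\Z$, which is necessarily a definite distance since a periodic function cannot be a Gaussian. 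Alternatively, and perhaps more robustly, one can avoid Babenko–Beckner entirely: run the whole argument on the torus $\Z/N\Z$ for a large prime $N$ (embedding the support of $f$), use the sharp Hausdorff–Young inequality there, and again note the extremizers are characters (delta functions on the Fourier side), from which a support of size $n\ge 2$ is bounded away.

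I expect the main obstacle to be making the gain \emph{uniform in $n$}: it is easy to show $\|\wh f\|_4 < \|f\|_{4/3}$ strictly for any fixed $f$ that is not a single point mass, but the infimum of $\|f\|_{4/3}/\|\wh f\|_4$ over all $f$ supported on $n$ points could a priori tend to $1$ as $n\to\infty$, which would only give $c = c(n)\to 0$ and be useless. The resolution should come from the continuous limit: as $n\to\infty$ the worst case approaches a dilated Gaussian, for which \eqref{Beckner} says the ratio is exactly $(9/(4\sqrt3))^{1/4} = (3\sqrt3/4)^{1/4} > 1$, a fixed constant strictly above $1$. So the correct picture is that $\|f\|_{4/3}/\|\wh f\|_4$ is bounded \emph{below} by something like $\min\{(3\sqrt3/4)^{1/4}, 1+c_0\}$ uniformly, with the small-$n$ range handled by finite computation and the large-$n$ range by the Gaussian comparison. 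Once that uniform multiplicative gain $1+c$ over Hausdorff–Young is in hand, combining with \eqref{eq:q-4/3} and the identity $n^{3/4-1/q}=(1+c)^{1/4}$ (which is exactly the definition of $q$ in the statement) yields
$$ \|\wh f\|_4 \le (1+c)^{-1/4}\,\|f\|_{4/3} \le (1+c)^{-1/4}\, n^{3/4-1/q}\,\|f\|_q = \|f\|_q, $$
completing the proof. The delicate point is that $|\supp f|$ is only assumed $\le n$, not exactly $n$; but since $\|f\|_{4/3}\le |\supp f|^{3/4-1/q}\|f\|_q \le n^{3/4-1/q}\|f\|_q$ regardless, and the Hausdorff–Young gain only improves for smaller supports, this causes no trouble.
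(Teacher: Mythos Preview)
Your central step --- the claimed uniform improvement $\|\wh f\|_4 \le (1+c)^{-1/4}\|f\|_{4/3}$ for every $f$ supported on at most $n$ points (or even on exactly $n$ points), with $c>0$ independent of $n$ --- is false. Take $f = \delta_0 + \eps(\delta_1+\cdots+\delta_{n-1})$; as $\eps\to 0$ the support is exactly $n$ but $\|\wh f\|_4/\|f\|_{4/3}\to 1$. Equality in the discrete Hausdorff--Young/Young inequality $\|\wh f\|_4\le \|f\|_{4/3}$ is attained by point masses, not by Gaussians, so the near-extremizers are near-deltas, and neither your compactness argument (the set of normalized $f$ with support \emph{exactly} $n$ is not closed) nor your large-$n$ Gaussian comparison (which ignores the delta extremizers entirely) can rescue a uniform multiplicative gain. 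The statement you correctly flagged as ``the real content'' simply does not hold.

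The paper's proof works precisely by confronting this obstruction. It runs a dichotomy: if $\|\wh f\|_4 \le (1-\delta)\|f\|_{4/3}$ then your chain $\|\wh f\|_4 \le (1-\delta)\|f\|_{4/3} \le (1-\delta)(1+\delta)^{1/4}\|f\|_q \le \|f\|_q$ finishes immediately. In the remaining case $\|\wh f\|_4 \ge (1-\delta)\|f\|_{4/3}$, one is near equality in Young's inequality, and an approximate inverse theorem (Proposition~\ref{prop:HY}, a special case of results of Charalambides--Christ and Eisner--Tao) forces $f = f(x_0)\delta_{x_0} + g$ with $\|g\|_{4/3}$ small. One then expands $\|f*f\|_2$ and $\|f\|_q$ directly in terms of $x=f(x_0)$ and $y=\|g\|_{4/3}$ and checks the inequality by hand, using that the delta part satisfies $\|\wh{\delta_{x_0}}\|_4 = \|\delta_{x_0}\|_q$ exactly and the cross terms have the right sign. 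So the missing idea in your proposal is exactly this stability/structure step for the near-equality regime; without it there is no way to close the argument.
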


The starting point of our proof of Proposition \ref{prop:upper-bound} is the inequality
\begin{equation}\label{eq:HY}
\|\widehat{f}\|_4 \leq \|f\|_{4/3} 
\end{equation}
which follows from the Hausdorff-Young inequality or Young's convolution inequality. By H\"{o}lder's inequality  (see \eqref{eq:q-4/3}) and the definition of $q$, we have
$$ \|f\|_{4/3} \leq n^{3/4 - 1/q} \|f\|_q = (1+c)^{1/4} \|f\|_q. $$
Thus the proof is already complete unless
$$ \|\widehat{f}\|_4 \geq (1+c)^{-1/4} \|f\|_{4/3}, $$
and thus a key part of our argument is to analyze when equality almost holds in \eqref{eq:HY}. Note that equality holds exactly in \eqref{eq:HY} when $f$ is supported on a singleton set. We prove in Proposition \ref{prop:HY} that if equality almost holds in \eqref{eq:HY}, then $f$ is well approximated by a function $f_0$ which is supported on a singleton set, up to an error $g$ which is small in $\ell^{4/3}$-norm. Clearly the function $f_0$ satisfies $\|\wh{f_0}\|_4 = \|f_0\|_q$. The remaining task is then to show that the error $g$ can only swing the inequality in the desired direction. The details are carried out in Section \ref{sec:upper-bound}.

We remark that Proposition \ref{prop:HY} is not new. In fact, it is a special case of \cite[Theorem 1.2]{CC} (see also \cite{Christ} for an analogous result in Euclidean spaces) and of \cite[Proposition 5.4]{EisnerTao}. As it turns out, our proof idea is the same as that in \cite{EisnerTao}, which in turn has its origin from \cite{Fournier}. For completeness, we still give a self-contained  proof of it in Section \ref{sec:upper-bound}.

\subsection{Questions and speculations}

Our proof of the lower bounds for $t_n$ is not constructive, which motivates the question of constructing explicit subsets of $\{0,1,\cdots,n-1\}^d$ with large additive energies.

\begin{question}
For sufficiently large $n$, construct a subset $A \subset \{0,1,\cdots,n-1\}^d$ for some $d$ such that $E(A) \geq |A|^t$, where
$$ t = 3 - (1+o_{n\rightarrow\infty}(1)) \log_n \frac{3\sqrt{3}}{4}. $$
\end{question}

A possible candidate for such a set $A$ is the set of lattice points in a $d$-dimensional ball $B_d \subset \R^d$ (with an appropriate choice of $d$ and an appropriate center and radius). This choice is motivated by results in \cite{Shao2} which implies, roughly speaking, that such a set $A$ maximizes the additive energy among all genuinely $d$-dimensional subsets of $\Z^d$ of a given cardinality. Moreover, $E(A) \approx E(B_d)$  and it follows from the computations in \cite[Section 3.1]{Mazur} that
$$ E(B_d) = \left(\frac{4\sqrt{3}}{9} + o_{d\rightarrow\infty}(1)\right)^d |B_d|^3, $$
where $|B_d|$ denotes the Lebesgue measure of $B_d$.

Next we speculate the asymptotic behavior of $t_n$ as $n\rightarrow\infty$. Note that if $g: \R\rightarrow \R$ is a (continuous) function supported on an interval of length $n$ and
$$ q = \frac{4}{3-\log_n\frac{3\sqrt{3}}{4}}, $$
then
$$ \|\wh{g}\|_4 \leq \left(\frac{4\sqrt{3}}{9}\right)^{1/4} \|g\|_{4/3} \leq \left(\frac{4\sqrt{3}}{9}\right)^{1/4}  n^{3/4-1/q} \|g\|_q = \|g\|_q, $$
where the first inequality follows from the Babenko-Beckner inequality \eqref{Beckner} and the second inequality follows from H\"{o}lder's inequality (a continuous version of \eqref{eq:q-4/3}). Based on this, it is perhaps reasonable to conjecture that a similar bound holds for discrete functions.

\begin{conjecture}
Let $\eps > 0$ and let $n$ be sufficiently large in terms of $\eps$. Let
$$  q = \frac{4}{3 - (1-\eps)\log_n\frac{3\sqrt{3}}{4}}. $$
Then for any function $f: \Z\rightarrow \R$ which is supported on an interval of length $n$, we have $\|\wh{f}\|_4 \leq \|f\|_q$.
\end{conjecture}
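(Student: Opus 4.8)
Via Proposition~\ref{prop:tool}, the conjecture is equivalent to the assertion that $\|\wh{f}\|_4\le\|f\|_q$ for every $f\colon\Z\to\R$ supported on an interval of length $n$. Hausdorff--Young followed by H\"older (as in \eqref{eq:q-4/3}) gives $\|\wh f\|_4\le\|f\|_{4/3}\le n^{3/4-1/q}\|f\|_q=(3\sqrt3/4)^{(1-\eps)/4}\|f\|_q$, which overshoots by the factor $(3\sqrt3/4)^{(1-\eps)/4}>1$; since $\frac{4\sqrt3}{9}\cdot\frac{3\sqrt3}{4}=1$, this is, up to the harmless $\eps$, precisely the factor by which the Babenko--Beckner inequality \eqref{Beckner} beats Hausdorff--Young on $\R$. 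So the task is to transfer that improvement to the discrete setting, bearing in mind that it cannot hold unconditionally: a single atom $f=c\delta_a$ saturates $\|\wh f\|_4=\|f\|_{4/3}$.

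I would approach this through a concentration dichotomy extending the argument of Section~\ref{sec:upper-bound}. Using the peeling technique of \cite{Fournier,EisnerTao,CC}, extract the atoms of $f$ of mass exceeding a small parameter $\delta$, writing $f=f_0+h$ with $f_0$ supported on at most $\delta^{-4/3}$ points and $\|h\|_\infty<\delta$, and then distinguish whether $f_0$ has an atom carrying essentially all of the $\ell^{4/3}$-mass. If it does, $f=c\delta_a+H$ with $\|H\|_{4/3}$ small, and one expands $\|\wh f\|_4^4=|c|^4+O(\|H\|_2^2)+\|\wh H\|_4^4+\cdots$ against $\|f\|_q^4=|c|^4+\tfrac4q\|H\|_q^q+\cdots$: the quadratic terms go the right way once $\delta$ is small (since $q<2$ and $\|H\|_\infty<\delta$ force $\|H\|_q^q\ge\delta^{q-2}\|H\|_2^2\gg\|H\|_2^2$), but the genuinely quartic term $\|\wh H\|_4^4=E(H)$ is not a priori negligible and must be absorbed --- either by recursing the argument on $H$, or by showing that a large $E(H)$ reflects additive structure that can itself be transferred to $\R$. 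If $f_0$ has no dominant atom, then either its atoms are balanced and spread, $\|\wh{f_0}\|_4^4\approx 2\|f_0\|_2^4-\|f_0\|_4^4$ is subcritical by an absolute amount and the crude H\"older bound already suffices, or $h$ carries most of the mass and is spread over $\gg1$ points, in which case one mollifies $f$ to a function on $\R$ supported on an interval of length $n(1+o(1))$, applies \eqref{Beckner}, and transfers back --- the difficulty being that this transference must not decrease the $\wh{L^4}$-norm and must be lossless up to $1+o(1)$.

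An alternative, more combinatorial route is suggested by the remarks following the Question in Section~\ref{sec:outline}. By the computation of \cite[Section~3.1]{Mazur}, the lattice points $A$ of a Euclidean ball efficiently inscribed in $\{0,\dots,n-1\}^d$ satisfy $E(A)=(4\sqrt3/9+o_{d\to\infty}(1))^d|A|^3$, and taking $d=d(n)\to\infty$ slowly yields $\log_{|A|}E(A)=3-(1+o(1))\log_n\frac{3\sqrt3}{4}$, matching the conjectured value of $t_n$. If one could show that such ball-like sets \emph{maximise} $\log_{|A|}E(A)$ over all subsets of $\{0,\dots,n-1\}^d$, uniformly in $d$ --- a rearrangement inequality for additive energy in the spirit of \cite{Shao2}, but with explicit error terms and crucially uniform in the dimension --- then the conjecture, in fact in its $\eps=0$ form, would follow.

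The main obstacle, shared by both routes, is the absence of a \emph{lossless} discrete analogue of Babenko--Beckner: the available discrete-to-continuous comparisons all lose an absolute multiplicative constant (or fail to control the $\wh{L^4}$-norm from below), the qualitative near-extremiser theory only yields the inequality up to a $1+o(1)$ factor, and the rearrangement inequality of \cite{Shao2} has not been made quantitative or dimension-uniform. Bridging this gap --- by a sharp monotonicity along a discrete heat or Ornstein--Uhlenbeck semigroup, by a genuinely multiscale induction handling functions concentrated at an intermediate scale $n^{\beta}$ with $0<\beta<1$, or by a uniform-in-dimension rearrangement argument --- is, I expect, where the real difficulty lies, which is presumably why the statement is posed as a conjecture.
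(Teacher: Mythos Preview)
The statement you are attempting is a \emph{Conjecture} in the paper, not a theorem: the paper offers no proof of it, only the heuristic motivation (Babenko--Beckner on $\R$ combined with H\"older, as in the paragraph preceding the conjecture). You correctly recognise this in your final paragraph, and indeed your proposal is not a proof but an outline of possible strategies together with an honest accounting of why each one currently falls short. There is therefore no proof in the paper to compare against.

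That said, your diagnosis of the obstacles is accurate and lines up with what the paper does manage to prove. The concentration dichotomy you describe is exactly the mechanism behind Proposition~\ref{prop:upper-bound}, but that argument only extracts a saving of some absolute constant $1+c$, not the specific constant $3\sqrt{3}/4$; pushing it to the sharp constant would require controlling the recursive quartic term $\|\wh H\|_4^4$ that you flag, and no such control is available. The mollification/transfer idea is the content of Section~\ref{sec:lower-bound}, but there it is used in the opposite direction (to bound $\|\wh f\|_4$ from \emph{below} for a specific $f$), and as you note the transfer is not lossless in the direction needed here. The rearrangement route via \cite{Shao2} and \cite{Mazur} is precisely what the paper alludes to in the Question and the surrounding discussion, and the missing ingredient you identify --- a dimension-uniform quantitative rearrangement inequality --- is genuinely open. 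So your proposal is a fair summary of the landscape, but it is not, and does not claim to be, a proof.
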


In particular, the conjecture would imply that 
$$ t_n = 3 - (1+o_{n\rightarrow\infty}(1)) \log_n \frac{3\sqrt{3}}{4}. $$
So perhaps the lower bound in Theorem \ref{main-thm} is sharp up to the error in $o(1)$.

\section{Lower bound for $t_n$}\label{sec:lower-bound}

In this section we prove Proposition \ref{prop:lower-bound}.
Throughout this section, let $\eps>0$ be small and let $n = 2k+1$ be sufficiently large in terms of $\eps$. We will construct a function $f: \Z\rightarrow \R$ supported on $\{-k,\cdots,k\}$ such that $\|\wh{f}\|_4 > \|f\|_q$, where
$$ q = \frac{4}{3-(1+\eps) \log_n \frac{3\sqrt{3}}{4}}. $$

Define $g: \R\rightarrow \R$ by $g(x) = \exp(-x^2/A)$, where $A = k^{2-\eps/10}$. 

\begin{lemma}\label{lem:g-property}
We have $\|\wh{g}\|_4 \geq (1+c\eps)\|g\|_q$ for some absolute constant $c > 0$.
\end{lemma}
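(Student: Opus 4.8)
The goal is to show that the Gaussian $g(x) = \exp(-x^2/A)$ with $A = k^{2-\eps/10}$ satisfies $\|\wh{g}\|_4 \geq (1+c\eps)\|g\|_q$. The strategy is to compute all three quantities $\|\wh{g}\|_4$, $\|g\|_{4/3}$, and $\|g\|_q$ essentially exactly (they are all Gaussian integrals), invoke the \emph{equality} case of the Babenko-Beckner inequality \eqref{Beckner}, and then check that the H\"older loss between $\|g\|_{4/3}$ and $\|g\|_q$ is smaller than the Babenko-Beckner gain. Concretely: first I would record $\wh{g}(y) = \sqrt{\pi A}\, e^{-\pi^2 A y^2}$, so that $g$ is (a dilate of) the extremiser and \eqref{Beckner} holds with equality, $\|\wh{g}\|_4 = (4\sqrt3/9)^{1/4}\|g\|_{4/3}$. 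Next, for any $p \geq 1$ one has $\|g\|_p^p = \int e^{-px^2/A}\dd x = \sqrt{\pi A/p}$, hence $\|g\|_p = (\pi A)^{1/(2p)} p^{-1/(2p)}$. Therefore
$$ \frac{\|g\|_{4/3}}{\|g\|_q} = (\pi A)^{3/8 - 1/(2q)} \cdot \frac{(4/3)^{-3/8}}{q^{-1/(2q)}} = (\pi A)^{\frac12(\frac34 - \frac1q)} \cdot C(q), $$
where $C(q) = (3/4)^{3/8} q^{1/(2q)} \to 1$ as $q \to 4/3$.

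The heart of the matter is then a bookkeeping computation of the exponent. By the definition of $q$ we have $3/4 - 1/q = \frac14(1+\eps)\log_n\frac{3\sqrt3}{4}$, so
$$ A^{\frac12(\frac34-\frac1q)} = A^{\frac18(1+\eps)\log_n\frac{3\sqrt3}{4}} = \exp\!\left( \tfrac18(1+\eps)\,\tfrac{\log A}{\log n}\,\log\tfrac{3\sqrt3}{4}\right). $$
Since $A = k^{2-\eps/10}$ and $n = 2k+1$, we have $\log A = (2-\eps/10)\log k$ and $\log n = \log k + O(1/k)$, so $\log A/\log n = 2 - \eps/10 + o(1)$ as $n\to\infty$. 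Hence the exponent is $\frac18(1+\eps)(2-\eps/10+o(1))\log\frac{3\sqrt3}{4} = \frac14\log\frac{3\sqrt3}{4}\cdot\left(1 + \tfrac{9}{10}\eps + O(\eps^2) + o(1)\right)$, giving
$$ A^{\frac12(\frac34-\frac1q)} = \left(\tfrac{3\sqrt3}{4}\right)^{1/4}\left(1 + \tfrac{9}{40}\eps\log\tfrac{3\sqrt3}{4} + o(\eps) + o_{n\to\infty}(1)\right). $$
Multiplying by $(4\sqrt3/9)^{1/4} = (3\sqrt3/4)^{-1/4}$ from Babenko-Beckner and absorbing $C(q) = 1 + O(\eps)$ (which is harmless since the main gain is a definite positive multiple of $\eps$; one should check $C(q) \geq 1 - O(\eps^2)$ or simply that the constant in front of $\eps$ stays positive after including $C(q)$), one obtains
$$ \frac{\|\wh{g}\|_4}{\|g\|_q} = \left(\tfrac{4\sqrt3}{9}\right)^{1/4} \cdot (\pi A)^{\frac12(\frac34-\frac1q)} \cdot \pi^{-\frac12(\frac34-\frac1q)} C(q) \cdot \frac{1}{\pi^{\,\cdot}} \cdots \geq 1 + c\eps $$
for a suitable absolute $c>0$ once $n$ is large enough (the stray powers of $\pi$ cancel once one writes $\|g\|_{4/3}/\|g\|_q$ and $\|\wh g\|_4/\|g\|_{4/3}$ consistently; I would organise the computation so that the ratio $\|\wh g\|_4/\|g\|_q$ is assembled directly from the two clean ratios above rather than tracking each norm separately).

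The one genuine obstacle is making sure the $o(\eps)$ and $o_{n\to\infty}(1)$ error terms — coming from the discrepancy $\log n - \log k$, from the $O(\eps^2)$ in expanding $(1+\eps)(2-\eps/10)$, and from the deviation of $C(q)$ from $1$ — are all dominated by the main term $\frac{9}{40}\eps\log\frac{3\sqrt3}{4} > 0$. This is purely quantitative: fix $\eps$ small enough that the $\eps^2$ terms are beaten, then take $n$ large in terms of $\eps$ so the $o_{n\to\infty}(1)$ terms are beaten, which is exactly the hypothesis of the lemma. No compactness or hard analysis is needed — it is all explicit Gaussian calculus plus the known extremal constant in \eqref{Beckner}.
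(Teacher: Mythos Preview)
Your approach is essentially the same as the paper's: both compute the Gaussian integrals $\|\wh g\|_4$ and $\|g\|_q$ explicitly and verify that the ratio exceeds $1+c\eps$. The only difference is organisational. You factor the ratio as $\frac{\|\wh g\|_4}{\|g\|_{4/3}}\cdot\frac{\|g\|_{4/3}}{\|g\|_q}$ and invoke the Babenko--Beckner extremal constant for the first factor, whereas the paper writes the single closed form
\[
\frac{\|\wh g\|_4}{\|g\|_q}=\left(\tfrac14\,q^{4/q}\,\pi^{3-4/q}\,A^{3-4/q}\right)^{1/8}
\]
and bounds the two pieces $A^{3-4/q}\ge(1+c\eps)\tfrac{27}{16}$ and $\tfrac14 q^{4/q}\pi^{3-4/q}\ge(1-\tfrac{c\eps}{2})\tfrac{16}{27}$ separately. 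This is exactly the reorganisation you yourself suggest at the end to avoid the ``stray powers of $\pi$'' and the $C(q)$ term, so once you carry that out your argument and the paper's coincide. (Two remarks: your coefficient $\tfrac{9}{10}\eps$ should be $\tfrac{19}{20}\eps$, and in fact $C(q)>1$ for $q>4/3$ since $\frac{d}{dq}\log C(q)=\frac{1-\log q}{2q^2}>0$, so that factor only helps.)
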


\begin{proof}
One can compute that
$$ \wh{g}(y) = (\pi A)^{1/2} e^{-\pi^2 A y^2}, $$
and hence
$$ \|\wh{g}\|_4^4 = (\pi A)^2 \int_{-\infty}^{\infty} e^{-4\pi^2 A y^2} \dd y = \frac{1}{2} (\pi A)^{3/2}. $$
On the other hand, we have
$$ \|g\|_q^q = \int_{-\infty}^{\infty} e^{-qx^2/A} \dd x = \left(\frac{\pi A}{q}\right)^{1/2}. $$
It follows that
$$ \frac{\|\wh{g}\|_4}{\|g\|_q} = \left(\frac{1}{4} q^{4/q} \pi^{3-4/q} A^{3-4/q}\right)^{1/8}. $$
By our choice of $A$, we have
$$ A^{3-4/q} = \exp\left( \left(2-\frac{\eps}{10}\right) (\log k) (1+\eps)\log_n\frac{3\sqrt{3}}{4} \right) \geq \exp\left((2+\eps)\log\frac{3\sqrt{3}}{4}\right) \geq (1+c\eps) \frac{27}{16}. $$
for some absolute constant $c>0$. By choosing $k$ to be sufficiently large in terms of $\eps$, we may ensure that $q$ is sufficiently close to $4/3$ so that
$$ \frac{1}{4} q^{4/q} \pi^{3-4/q} \geq \left(1-\frac{c\eps}{2}\right) \frac{1}{4} \left(\frac{4}{3}\right)^3 = \left(1-\frac{c\eps}{2}\right) \frac{16}{27}.  $$
Combining the two inequalities above, we conclude that
$$ \frac{\|\wh{g}\|_4}{\|g\|_q} \geq \left[(1+c\eps)\left(1-\frac{c\eps}{2}\right)\right]^{1/8} \geq 1 + \frac{c\eps}{100}. $$
\end{proof}

Now we truncate $g$ to have bounded support. Set $M = \lfloor k^{1-\eps/100}\rfloor$. Let $g_M: \R\rightarrow \R$ be the truncation of $g$ defined by
$$ g_M(x) = \begin{cases} g(x) & \text{if } -M \leq x < M, \\ 0 & \text{otherwise.} \end{cases} $$

\begin{lemma}\label{lem:gM-approx-g}
We have $\|\wh{g_M}\|_4 \geq \|\wh{g}\|_4 - \exp(-k^{\eps/20})$ and $\|g_M\|_q \leq \|g\|_q$.
\end{lemma}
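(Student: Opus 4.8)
The plan is to bound the two errors incurred by replacing $g$ with its truncation $g_M$. For the $\ell^q$-bound this is immediate: since $0 \le g_M(x) \le g(x)$ pointwise, we have $\|g_M\|_q \le \|g\|_q$ with nothing further to check. The content of the lemma is therefore the Fourier-side estimate, where truncating a function does change its Fourier transform, but only by a controlled amount because the tail of the Gaussian $g$ that we discard is tiny in our regime $M = \lfloor k^{1-\eps/100}\rfloor$, $A = k^{2-\eps/10}$.

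First I would write $g = g_M + (g - g_M)$, where the tail $g - g_M$ is supported on $|x| \ge M$. By the triangle inequality in $L^4$ (i.e. Minkowski's inequality for $\|\cdot\|_4$),
$$ \|\wh{g_M}\|_4 \ge \|\wh{g}\|_4 - \|\widehat{g - g_M}\|_4. $$
So it suffices to show $\|\widehat{g-g_M}\|_4 \le \exp(-k^{\eps/20})$. Here I would use the crude bound $\|\wh{h}\|_4 \le \|\wh{h}\|_\infty^{1/2}\|\wh{h}\|_2^{1/2} \le \|h\|_1^{1/2}\|h\|_2^{1/2}$, valid by Hausdorff--Young (or just $\|\wh h\|_\infty\le\|h\|_1$ and Plancherel $\|\wh h\|_2=\|h\|_2$). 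Both $\|g-g_M\|_1$ and $\|g-g_M\|_2$ are Gaussian tail integrals: for instance
$$ \|g - g_M\|_1 = 2\int_M^\infty e^{-x^2/A}\,\dd x \le 2\int_M^\infty \frac{x}{M} e^{-x^2/A}\,\dd x = \frac{A}{M} e^{-M^2/A}, $$
and similarly $\|g-g_M\|_2^2 \le \frac{A}{2M}e^{-2M^2/A}$. The exponent is governed by $M^2/A = k^{2 - \eps/50}/k^{2-\eps/10} = k^{\eps/10 - \eps/50} = k^{2\eps/25}$, which dominates the polynomial prefactors $A/M$, so the product $\|g-g_M\|_1^{1/2}\|g-g_M\|_2^{1/2}$ is at most $\exp(-\tfrac12 M^2/A \cdot(1+o(1))) \le \exp(-k^{\eps/20})$ for $k$ large (the constant $\eps/20$ is comfortably below $2\eps/25$, absorbing lower-order terms).

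The only mild subtlety — and the step I'd be most careful about — is bookkeeping the exponents: one must check that $M^2/A$ is a positive power of $k$ and that $\eps/20 < 2\eps/25$ so the stated clean bound $\exp(-k^{\eps/20})$ genuinely holds once $k$ is sufficiently large in terms of $\eps$. There is no real obstacle here; it is a routine tail estimate, and the generous slack built into the choices of $A$ and $M$ makes the inequalities go through with room to spare.
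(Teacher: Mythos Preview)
Your proposal is correct and follows essentially the same approach as the paper: both use the triangle inequality to reduce to bounding $\|\widehat{g-g_M}\|_4$, then control this by physical-space norms of the Gaussian tail and exploit that $M^2/A = k^{2\eps/25}$ is a positive power of $k$. The only cosmetic difference is the interpolation step: the paper uses Hausdorff--Young $\|\widehat{h}\|_4 \le \|h\|_{4/3} \le \|h\|_\infty^{1/4}\|h\|_1^{3/4}$, whereas you use $\|\widehat{h}\|_4 \le \|\widehat{h}\|_\infty^{1/2}\|\widehat{h}\|_2^{1/2} \le \|h\|_1^{1/2}\|h\|_2^{1/2}$; both yield the required $\exp(-k^{\eps/20})$ bound with room to spare.
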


\begin{proof}
The inequality $\|g_M\|_q \leq \|g\|_q$ follows trivially from the definition of $g_M$. Concerning the $L^4$-norm of their Fourier transforms, we have by the triangle inequality, Hausdorff-Young inequality, and H\"{o}lder's inequality that
$$ \|\wh{g}\|_4 - \|\wh{g_M}\|_4 \leq \|\wh{g-g_M}\|_4 \leq \|g-g_M\|_{4/3} \leq \|g-g_M\|_{\infty}^{1/4} \|g-g_M\|_1^{3/4}. $$
Since
$$ \|g-g_M\|_{\infty} \leq g(M) = \exp(-M^2/A) \leq \exp(-k^{\eps/15}) $$
and
$$ \|g-g_M\|_1 \leq \|g\|_1 = \int_{-\infty}^{\infty} e^{-x^2/A} \dd x = (\pi A)^{1/2} \ll k, $$
it follows that
$$ \|g-g_M\|_{\infty}^{1/4} \|g-g_M\|_1^{3/4} \leq \exp(-k^{\eps/20}), $$
once $k$ is large enough in terms of $\eps$.
\end{proof}

Now we discretize $g_M$.
Define $f: \Z \rightarrow \R$ by $f(m) = g_M(m)$ for $m \in \Z$. Then $f$ is supported on $\{-M,\cdots,M\} \subset \{-k,\cdots,k\}$.

\begin{lemma}\label{lem:fg}
For $m \in \Z$, let $I_m = [m, m+1)$. Then
$$ \sup_{x \in I_m} |g_M(x) - f(m)| \ll k^{-1/2} f(m) $$
for every $m \in \Z$.
\end{lemma}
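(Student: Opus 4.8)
The plan is to split the verification according to whether $m$ falls in the range where $f$ is nonzero. Since $M$ is an integer, for every $m \in \Z$ the interval $I_m = [m, m+1)$ either lies entirely inside $[-M, M)$ (precisely when $-M \le m \le M-1$) or entirely outside it. In the latter case $g_M$ is identically $0$ on $I_m$ and $f(m) = g_M(m) = 0$, so both sides of the claimed inequality vanish and nothing needs to be checked. Recording this integrality of $M$ at the outset is really the only slightly delicate point: it guarantees that on each interval $I_m$ one never has to compare the truncated function $g_M$ with the untruncated $g$.

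It therefore remains to treat $-M \le m \le M-1$, where $g_M(x) = g(x) = e^{-x^2/A}$ for all $x \in I_m$ and $f(m) = e^{-m^2/A} > 0$. I would write
$$ g_M(x) - f(m) = f(m)\left( e^{-(x^2-m^2)/A} - 1 \right) $$
and bound $|x^2 - m^2| = |x-m|\,|x+m| \le 1 \cdot 2M = 2M$, using $0 \le x - m < 1$ and $|x|, |m| \le M$ on $I_m$. With $A = k^{2-\eps/10}$ and $M \le k^{1-\eps/100}$ this yields
$$ \frac{|x^2 - m^2|}{A} \le \frac{2M}{A} \le 2 k^{-1+9\eps/100}, $$
which is at most $1$ once $k$ is large in terms of $\eps$. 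Inserting this into the elementary estimate $|e^{-u} - 1| \le e\,|u|$, valid for $|u| \le 1$, gives
$$ \sup_{x \in I_m} |g_M(x) - f(m)| \le 2e\, k^{-1+9\eps/100}\, f(m). $$

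Finally, since $\eps$ is small — certainly $\eps \le 50/9$ — we have $-1 + 9\eps/100 \le -1/2$, so $k^{-1+9\eps/100} \le k^{-1/2}$ and the right-hand side above is $\ll k^{-1/2} f(m)$, which is the assertion. No real obstacle arises: once the truncation boundary is handled via the integrality of $M$, the estimate is a one-line mean-value-theorem bound for the exponential, and the crude target $k^{-1/2}$ is comfortably met (with room to spare) by the genuine rate $k^{-1+9\eps/100}$.
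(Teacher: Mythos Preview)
Your proof is correct and follows essentially the same approach as the paper: the same case split via the integrality of $M$, and the same core estimate $M/A \ll k^{-1/2}$ driving the bound. The only cosmetic difference is that you factor out $f(m)$ and bound $|e^{-u}-1|$ directly, whereas the paper applies the mean value theorem to $g$ and then separately controls $\sup_{y\in I_m} g(y)$ in terms of $g(m)$; your organization is slightly cleaner but not substantively different.
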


\begin{proof}
If $m \geq M$ or $m \leq -M-1$, then $f(m) = 0$ and $g_M(x) = 0$ for every $x \in I_m$, and hence the conclusion holds trivially. Now assume that $m \in \{-M, \cdots, M-1\}$, so that $I_m \subset [-M, M)$ and thus $g_M(x) = g(x)$ for $x \in I_m$. Hence for $x \in I_m$ we have
$$ |g_M(x) - f(m)| = |g(x) - g(m)| \leq \sup_{y \in [x,m]} |g'(y)| = \frac{2}{A} \sup_{y \in I_m} |yg(y)| \leq \frac{2}{A}(1+|m|) \sup_{y \in I_m} g(y) $$
Since
$$ g(m+1) = g(m) e^{-(2m+1)/A} \leq g(m) e^{2M/A} \leq 2g(m), $$
it follows that
$$ |g_M(x) - f(m)| \ll \frac{M}{A} g(m) \ll k^{-1/2} g(m). $$
\end{proof}

\begin{lemma}\label{lem:f-approx-gM}
We have $\|\wh{g_M}\|_4 \leq (1 + O(k^{-1/2})) \|\wh{f}\|_4$ and $\|g_M\|_q  = (1 + O(k^{-1/2})) \|f\|_q$.
\end{lemma}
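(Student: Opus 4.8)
The plan is to transfer the pointwise estimate from Lemma~\ref{lem:fg} into the two norms of interest by viewing $f$, suitably interpreted, as a step-function approximation to $g_M$. Concretely, define the step function $\tilde f:\R\to\R$ by $\tilde f(x) = f(m)$ for $x\in I_m = [m,m+1)$; then $\|\tilde f\|_q = \|f\|_q$ exactly, while Lemma~\ref{lem:fg} gives $|g_M(x) - \tilde f(x)| \ll k^{-1/2} \tilde f(x)$ for all $x\in\R$. The second claim, $\|g_M\|_q = (1+O(k^{-1/2}))\|f\|_q$, then follows immediately: raising the pointwise bound to the $q$-th power and integrating yields $\big| \|g_M\|_q^q - \|\tilde f\|_q^q \big| \ll k^{-1/2}\|\tilde f\|_q^q$ (the cross terms are all controlled by $k^{-1/2}\tilde f(x)^q$ up to constants since $q$ is bounded), and taking $q$-th roots gives the multiplicative error $1+O(k^{-1/2})$.

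For the $\widehat{L^4}$-norm, first I would compare $\|\widehat{g_M}\|_4$ with $\|\widehat{\tilde f}\|_4$. By the triangle inequality and Hausdorff--Young,
$$ \big| \|\widehat{g_M}\|_4 - \|\widehat{\tilde f}\|_4 \big| \leq \|\widehat{g_M - \tilde f}\|_4 \leq \|g_M - \tilde f\|_{4/3} \ll k^{-1/2} \|\tilde f\|_{4/3}. $$
Now $\tilde f$ is a step function supported on an interval of length $O(M)$, so by H\"older $\|\tilde f\|_{4/3} \ll M^{3/4-1/4}\|\tilde f\|_4 = M^{1/2}\|\tilde f\|_4$; alternatively, and more efficiently, one compares $\|\tilde f\|_{4/3}$ directly to $\|\widehat{\tilde f}\|_4$-type quantities, but the cleanest route is to note $\|\tilde f\|_{4/3} \ll \|g\|_{4/3} \ll k^{1/2}$ (since $\tilde f(m) \asymp g(m)$ on its support and the sum is comparable to the integral defining $\|g\|_{4/3}^{4/3} \asymp A^{1/2} \asymp k$), whereas $\|\widehat{g_M}\|_4 \asymp \|\widehat g\|_4 \asymp A^{3/8} \asymp k^{3/4}$ by the explicit computation in Lemma~\ref{lem:g-property} together with Lemma~\ref{lem:gM-approx-g}. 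Hence $\|\widehat{g_M} - \widehat{\tilde f}\|_4 \ll k^{-1/2}\cdot k^{1/2} = O(1)$, which is not yet good enough as an additive error against a quantity of size $k^{3/4}$ — wait, in fact $O(1)$ against $k^{3/4}$ \emph{is} a relative error of $O(k^{-3/4}) = O(k^{-1/2})$, so this does suffice; I should double-check the bookkeeping but morally the Fourier side has extra room.

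The last remaining point is that $\widehat{\tilde f}$ is the Fourier transform of a function on $\R$, whereas the statement $\|g_M\|_4 \leq (1+O(k^{-1/2}))\|\widehat f\|_4$ refers to the Fourier transform of $f:\Z\to\R$ in the sense of the periodic transform $\widehat f(\theta) = \sum_m f(m)e(-m\theta)$. These are related by the identity $\widehat{\tilde f}(y) = \widehat f(y)\cdot\widehat{1_{[0,1)}}(y)$, where $\widehat{1_{[0,1)}}(y) = e(-y/2)\,\mathrm{sinc}(\pi y)$ has modulus $|\,\mathrm{sinc}(\pi y)| \leq 1$; so $\|\widehat{\tilde f}\|_4 \leq \|\widehat f\|_{L^4(\R)}$-type comparison must be handled with care, since $\widehat f$ is $1$-periodic and not $L^4(\R)$-integrable on its own. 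The correct way is to break $\R$ into unit intervals and use that $\widehat f$ is periodic: $\|\widehat{\tilde f}\|_{L^4(\R)}^4 = \sum_{j\in\Z}\int_0^1 |\widehat f(\theta)|^4 |\mathrm{sinc}(\pi(\theta+j))|^4\,d\theta = \int_0^1 |\widehat f(\theta)|^4 \Big(\sum_j |\mathrm{sinc}(\pi(\theta+j))|^4\Big) d\theta$, and the inner sum is a bounded function of $\theta$ that is $\geq |\mathrm{sinc}(\pi\theta)|^4 = 1 - O(\theta^2)$ on $[-1/2,1/2]$; since $f$ is supported on $O(M)$ points with $M \asymp k$, the mass of $\widehat f$ concentrates (via the computation of $\|\widehat f\|_4^4 \asymp \|\widehat g\|_4^4$) near $\theta = 0$ at scale $\asymp 1/k$, so the factor $1 - O(\theta^2)$ costs only $1 - O(k^{-2})$. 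This concentration step — quantifying that $\widehat f$ puts almost all its $L^4$-mass within $O(k^{-1/2+o(1)})$ of the origin, so that the $\mathrm{sinc}$-factor and the periodization are both $1 + O(k^{-1/2})$ — is the main obstacle, and I would carry it out by comparing directly with the already-computed Gaussian $\widehat{g_M}$, whose $L^4$-mass is explicitly Gaussian-concentrated at scale $A^{-1/2} \asymp k^{-1+o(1)}$, and then transferring via the bound $\|\widehat{g_M} - \widehat{\tilde f}\|_4 = O(1)$ established above. Combining the three comparisons ($g_M$ vs $\tilde f$ on the Fourier side; the $\mathrm{sinc}$/periodization factor; and $\tilde f$ vs $f$) yields $\|\widehat{g_M}\|_4 \leq (1+O(k^{-1/2}))\|\widehat f\|_4$, completing the proof.
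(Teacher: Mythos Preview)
Your $L^q$ comparison is the same as the paper's. For the $\widehat{L^4}$ part your route is genuinely different: the paper stays on the physical side, expanding $\|\wh{g_M}\|_4^4 = \|g_M*g_M\|_2^2$ as a triple integral, chopping $\R$ into the intervals $I_a$, and using Lemma~\ref{lem:fg} to replace $g_M\vert_{I_a}$ by $f(a)1_{I_a}$. After a change of variables this reduces to $\sum_a I(0,0,0,a)\int_0^1 |\wh f(\theta)|^4 e(a\theta)\,\dd\theta$, and since the kernel coefficients $I(0,0,0,a)$ are nonnegative with $\sum_a I(0,0,0,a)=1$, the bound $\le \|\wh f\|_4^4$ is immediate. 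No concentration argument is needed.

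Your step-function approach can be made to work, but you are missing the one observation that would make it clean. You correctly arrive at $\|\wh{\tilde f}\|_4^4 = \int_0^1 |\wh f(\theta)|^4 K(\theta)\,\dd\theta$ with $K(\theta)=\sum_{j\in\Z}|\mathrm{sinc}(\pi(\theta+j))|^4$, and then propose to control this via concentration of $|\wh f|^4$ near $\theta=0$. This is unnecessary: the classical identity $\sum_{j\in\Z}|\mathrm{sinc}(\pi(\theta+j))|^2 = 1$ (from the partial-fraction expansion of $\pi^2/\sin^2(\pi\theta)$) immediately gives $K(\theta)\le 1$ for all $\theta$, and hence $\|\wh{\tilde f}\|_4 \le \|\wh f\|_4$ outright. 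Combined with your Hausdorff--Young step $\|\wh{g_M}-\wh{\tilde f}\|_4 \ll k^{-1/2}\|\tilde f\|_{4/3}$ and the size estimate $\|\tilde f\|_{4/3}\asymp \|\wh{g_M}\|_4\asymp A^{3/8}$ (your ``$\ll k^{1/2}$'' here is a slip; both are $\asymp A^{3/8}$, which is exactly why the relative error is $O(k^{-1/2})$), the lemma follows without any concentration input. In fact the paper's kernel $\sum_a I(0,0,0,a)e(a\theta)$ \emph{is} $K(\theta)$, so the two arguments are the same positivity fact seen from opposite sides of the Fourier transform.

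As written, your concentration step is workable but delicate: you need concentration of $|\wh f|^4$ on $\R/\Z$, which you obtain by transferring Gaussian concentration of $\wh{g_M}$ to $\wh{\tilde f}$ via $L^4$-closeness and then to $\wh f$ via $|\mathrm{sinc}(\pi\theta)|\gg 1$ on $[-\tfrac12,\tfrac12]$. This chain is not circular, but it is several steps longer than needed and your write-up leaves the bookkeeping loose.
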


\begin{proof}
Note that
\begin{equation*}
\begin{split}
\|\wh{g_M}\|_4^4 &= \int\int\int g_M(x_1)g_M(x_2)g_M(x_3)g_M(x_1+x_2-x_3) \dd x_1 \dd x_2 \dd x_3 \\
&= \sum_{a_1,a_2,a_3,a_4 \in \Z} \int\int\int g_M\vert_{I_{a_1}}(x_1) g_M\vert_{I_{a_2}}(x_2) g_M\vert_{I_{a_3}}(x_3) g_M\vert_{I_{a_4}}(x_1+x_2-x_3) \dd x_1 \dd x_2 \dd x_3.
\end{split}
\end{equation*}
By Lemma \ref{lem:fg} we have
$$ g_M\vert_{I_a}(x) = (1+O(k^{-1/2})) f(a) 1_{I_a}(x) $$
for any $a \in \Z$ and $x \in \R$. Hence
$$
\|\wh{g_M}\|_4^4 = \left(1 + O(k^{-1/2})\right) \sum_{a_1,a_2,a_3,a_4 \in \Z} f(a_1) f(a_2) f(a_3) f(a_4) I(a_1,a_2,a_3,a_4), 
$$
where
$$
I(a_1,a_2,a_3,a_4) = \int\int\int 1_{I_{a_1}}(x_1) 1_{I_{a_2}}(x_2) 1_{I_{a_3}}(x_3) 1_{I_{a_4}}(x_1+x_2-x_3) \dd x_1 \dd x_2 \dd x_3.
$$
By shifting the variables $x_1,x_2,x_3$ in the integral above, we see that
$$ I(a_1,a_2,a_3,a_4) = I(0, 0, 0, a_3+a_4-a_1-a_2). $$
It follows that 
$$ \|\wh{g_M}\|_4^4 = \left(1 + O(k^{-1/2})\right) \sum_{a \in \Z} I(0,0,0,a) \sum_{\substack{a_1,a_2,a_3,a_4 \in \Z \\ a_3+a_4-a_1-a_2=a}} f(a_1) f(a_2) f(a_3) f(a_4) $$
By Fourier analysis, we have
$$
\sum_{\substack{a_1,a_2,a_3,a_4 \in \Z \\ a_3+a_4-a_1-a_2=a}} f(a_1) f(a_2) f(a_3) f(a_4) = \int_0^1 |\wh{f}(\theta)|^4 e(a\theta) \dd \theta \leq \|f\|_4^4.
$$
Hence
$$ \|\wh{g_M}\|_4^4 \leq  \left(1 + O(k^{-1/2})\right) \|f\|_4^4 \sum_{a \in \Z} I(0,0,0,a) = \left(1 + O(k^{-1/2})\right) \|f\|_4^4. $$ 
This proves the first bound in the lemma. For the second bound concerning the $L^q$-norms, note that
$$  \|f\|_q^q - \|g_M\|_q^q = \sum_{a \in \Z} f(a)^q - \int_{-\infty}^{\infty} g_M(x)^q \dd x = \sum_{a \in \Z} \left(f(a)^q - \int_a^{a+1} g_M(x)^q \dd x\right). $$
By Lemma \ref{lem:fg} we have
$$ \int_a^{a+1} g_M(x)^q \dd x = (1+O(k^{-1/2})) f(a)^q $$
for every $a \in \Z$. It follows that
$$ \|f\|_q^q - \|g_M\|_q^q  = O\left(k^{-1/2} \sum_{a \in \Z} f(a)^q\right) = O\left(k^{-1/2} \|f\|_q^q\right). $$
This proves the second bound in the lemma.
\end{proof}

We may now complete the proof of Proposition \ref{prop:lower-bound} by combining the lemmas above. Indeed, by Lemmas \ref{lem:gM-approx-g} and \ref{lem:f-approx-gM} we have
$$ \|f\|_q \leq (1 + O(k^{-1/2})) \|g_M\|_q \leq (1 + O(k^{-1/2})) \|g\|_q $$
and 
$$ \|\wh{f}\|_4 \geq (1 - O(k^{-1/2})) \|\wh{g_M}\|_4 \geq (1 - O(k^{-1/2})) \left(\|\wh{g}\|_4 - \exp(-k^{\eps/20})\right). $$ 
Since $\|\wh{g}\|_4 \asymp A^{3/8}$, we have
$$ \|\wh{f}\|_4 \geq (1 - O(k^{-1/2})) \|\wh{g}\|_4. $$
It follows from Lemma \ref{lem:g-property} that
$$ \frac{\|\wh{f}\|_4}{\|f\|_q} \geq (1-O(k^{-1/2})) \frac{\|\wh{g}\|_4}{\|g\|_q} \geq (1 - O(k^{-1/2})) (1 + c\eps) > 1, $$ 
once $k$ is large enough in terms of $\eps$.

\section{Upper bound for $t_n$}\label{sec:upper-bound}

In this section we prove Proposition \ref{prop:upper-bound}. As explained in Section \ref{sec:outline}, a key ingredient is an approximate inverse theorem for Young's convolution inequality, Proposition \ref{prop:HY} below, which is a special case of results in \cite{CC, EisnerTao}. For completeness, we give a self-contained proof of it. In preparation for the proof, we start with establishing an approximate inverse theorem for H\"{o}lder's inequality, Lemma \ref{lem:Holder}, which is a special case of  \cite[Lemma 5.1]{EisnerTao}.

\subsection{Near equality in H\"{o}lder's inequality}

In this section, all implied constants are allowed to depend on the exponents $p,q,r$. 

\begin{lemma}\label{lem:pq}
Let $p,q \in (1,+\infty)$ be exponents with $1/p + 1/q = 1$. Let $a, b$ be non-negative reals. Suppose that
$$ \frac{a^p}{p} + \frac{b^q}{q} \leq (1 + \delta)ab $$
for some sufficiently small constant $\delta > 0$. Then $a^p = (1 + O(\delta^{1/2}))b^q$.
\end{lemma}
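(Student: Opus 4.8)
The plan is to reduce this to a one–variable convexity estimate. By homogeneity (replacing $a$ by $ta$ and $b$ by $t^{p/q}b$ scales both sides by $t^p$ provided we are careful, or more simply by scaling so that $ab=1$ when both are positive), we may assume $a,b>0$ and normalize $b = a^{p-1}$, equivalently $a^p = b^q =: u$, is exactly the equality case; so I would instead set $x = a^p$ and $y = b^q$ and write the hypothesis as $x/p + y/q \le (1+\delta)x^{1/p}y^{1/q}$. Dividing through by $y$ (the case $y=0$ forces $x=0$ and there is nothing to prove) and putting $s = (x/y)^{1/q}\ge 0$, so that $x/y = s^q$ and $x^{1/p}y^{1/q}/y = (x/y)^{1/p} = s^{q/p} = s^{q-1}$, the inequality becomes
\begin{equation*}
\frac{s^q}{p} + \frac{1}{q} \le (1+\delta)s^{q-1}.
\end{equation*}
Equivalently, with $\phi(s) := \tfrac{1}{p}s^q - s^{q-1} + \tfrac{1}{q}$, we have $\phi(s) \le \delta s^{q-1}$.

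Next I would analyze $\phi$. One checks $\phi(1) = \tfrac1p - 1 + \tfrac1q = 0$ and $\phi'(s) = \tfrac{q}{p}s^{q-1} - (q-1)s^{q-2} = (q-1)s^{q-2}\bigl(s-1\bigr)$ (using $q/p = q-1$), so $\phi$ has a strict global minimum at $s=1$ with $\phi(1)=0$, and $\phi''(1) = (q-1)(q-2) + (q-1) = (q-1)^2 > 0$. Hence by Taylor expansion there is a constant $c_0 = c_0(p,q) > 0$ and a neighborhood $|s-1|\le \eta$ on which $\phi(s) \ge c_0 (s-1)^2$. Combined with $\phi(s)\le \delta s^{q-1}$, on that neighborhood $s^{q-1}$ is bounded, so $(s-1)^2 \ll \delta$, i.e. $s = 1 + O(\delta^{1/2})$. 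To rule out $s$ far from $1$, note that for $\delta$ small the inequality $\phi(s)\le \delta s^{q-1}$ cannot hold outside $|s-1|\le\eta$: on any compact subset of $(0,\infty)$ avoiding $s=1$ we have $\phi(s)\ge c_1 > 0$ while $\delta s^{q-1}$ is small, and as $s\to 0^+$ we have $\phi(s)\to \tfrac1q>0$ while $\delta s^{q-1}\to 0$, and as $s\to\infty$ we have $\phi(s)/s^{q-1} = \tfrac1p s - 1 + \tfrac1q s^{1-q} \to \infty$ so $\phi(s) > \delta s^{q-1}$ eventually; quantitatively $\phi(s) \ge \tfrac{1}{2p} s \cdot s^{q-1}$ for $s$ large, which exceeds $\delta s^{q-1}$ once $s > 2p\delta$, a condition automatic when $\delta$ is small. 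Thus for $\delta$ small enough (in terms of $p,q$) we must have $|s-1|\le\eta$, and then $s - 1 = O(\delta^{1/2})$.

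Finally I would translate back: $s = (x/y)^{1/q} = 1 + O(\delta^{1/2})$ gives $x/y = (1+O(\delta^{1/2}))^q = 1 + O(\delta^{1/2})$, that is $x = a^p = (1+O(\delta^{1/2}))\,y = (1+O(\delta^{1/2}))\,b^q$, which is the claim. (The degenerate cases where $a=0$ or $b=0$: if $b=0$ the hypothesis reads $a^p/p \le 0$ so $a=0$ and the conclusion is trivial; symmetrically for $a=0$.) The only mildly delicate point is making the ``$s$ far from $1$ is impossible'' step uniform, but since all the competing functions are explicit powers of $s$ with coefficients depending only on $p,q$, this is routine; the heart of the matter is the quadratic lower bound $\phi(s)\gtrsim (s-1)^2$ near the minimum, which follows from $\phi''(1) = (q-1)^2 > 0$. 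I expect this local convexity estimate to be the main (though still elementary) obstacle, and everything else is bookkeeping.
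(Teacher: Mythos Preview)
Your argument is correct; the one small slip is that $\phi''(1) = q-1$, not $(q-1)^2$ (differentiate $(q-1)s^{q-2}(s-1)$ by the product rule and evaluate at $s=1$), but this is still positive so nothing changes. Both your proof and the paper's are second-order convexity estimates, but the executions differ. The paper works directly with the two quantities $a^p$ and $b^q$: it Taylor-expands $\psi(x)=\log x$ about the weighted mean $x_0 = a^p/p + b^q/q$, uses $\psi'' = -1/x^2 < 0$, and arrives at the explicit bound $(a^p - b^q)^2 \ll \delta \max(a^{2p}, b^{2q})$. Your proof instead scales out one variable, reducing to a single function $\phi(s)$ with a strict minimum at $s=1$, and then reads the $\delta^{1/2}$ from the nondegeneracy of that minimum. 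Your reduction makes the local quadratic behavior very transparent and avoids tracking two Taylor remainders, at the cost of a separate ``rule out $s$ far from $1$'' step; the paper's logarithmic approach handles all magnitudes at once and yields a slightly cleaner quantitative statement. Either route is perfectly adequate for how the lemma is used downstream.
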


\begin{proof}
If $ab=0$, then the conclusion holds trivially. Henceforth assume that $a,b > 0$. By Taylor's theorem applied to the function $\psi(x) = \log x$ at the point $x_0 = a^p/p + b^q/q$, we have
$$ \psi(a^p) = \psi(x_0) + (a^p - x_0) \psi'(x_0) + \frac{1}{2} (a^p-x_0)^2 \psi''(\xi_1) $$
and
$$  \psi(b^q) = \psi(x_0) + (b^q - x_0) \psi'(x_0) + \frac{1}{2} (b^q-x_0)^2 \psi''(\xi_2) $$
for some $\xi_1,\xi_2$ lying between $a^p$ and $b^q$. Since
$$ a^p - x_0 = \frac{a^p - b^q}{q}, \ \ b^q-x_0 = \frac{b^q-a^p}{p}, $$
it follows that
$$ \frac{1}{p}\psi(a^p) + \frac{1}{q}\psi(b^q) = \psi(x_0) + \frac{(a^p-b^q)^2}{2pq^2} \psi''(\xi_1) + \frac{(a^p-b^q)^2}{2p^2q}\psi''(\xi_2). $$
Since $\psi''(x) = -1/x^2$, we have
$$ \psi''(\xi_i) \leq -\min\left(\frac{1}{a^{2p}}, \frac{1}{b^{2q}}\right). $$
From hypothesis we have
$$ \frac{1}{p}\psi(a^p) + \frac{1}{q}\psi(b^q) - \psi(x_0) = \log a + \log b - \log\left(\frac{a^p}{p} + \frac{b^q}{q}\right) \geq -\log(1+\delta) \geq - \delta. $$
Hence it follows that
$$ -\delta \leq -(a^p-b^q)^2\left(\frac{1}{2pq^2} + \frac{1}{2p^2q}\right) \min\left(\frac{1}{a^{2p}}, \frac{1}{b^{2q}}\right) = -\frac{(a^p-b^q)^2}{2pq} \min\left(\frac{1}{a^{2p}}, \frac{1}{b^{2q}}\right), $$
and thus
$$ (a^p - b^q)^2 \ll  \delta \max(a^{2p}, b^{2q}). $$
The desired conclusion follows immediately.
\end{proof}

\begin{lemma}\label{lem:pqr}
Let $p,q,r \in (1,+\infty)$ be exponents with $1/p + 1/q + 1/r = 1$. Let $a, b, c$ be non-negative reals. Suppose that
$$ \frac{a^p}{p} + \frac{b^q}{q} + \frac{c^r}{r} \leq (1 + \delta)abc $$
for some sufficiently small constant $\delta > 0$. Then $a^p = (1 + O(\delta^{1/2}))b^q = (1 + O(\delta^{1/2})c^r$.
\end{lemma}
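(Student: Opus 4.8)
The plan is to reduce the three-variable Young/H\"older inequality to two applications of the two-variable Lemma~\ref{lem:pq}. The key observation is that the constrained AM--GM inequality $\frac{a^p}{p}+\frac{b^q}{q}+\frac{c^r}{r}\geq abc$ (valid whenever $1/p+1/q+1/r=1$) can be proved by grouping: set $1/p' = 1/p+1/q$, so that $1/p' + 1/r = 1$, and first apply the two-variable inequality with exponents $p',r$ to the pair $(u,c)$ where $u$ is chosen so that $\frac{u^{p'}}{p'} = \frac{a^p}{p}+\frac{b^q}{q}$, i.e. $u = \left(p'\bigl(\tfrac{a^p}{p}+\tfrac{b^q}{q}\bigr)\right)^{1/p'}$. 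Then, having controlled $uc$ in terms of $\frac{a^p}{p}+\frac{b^q}{q}+\frac{c^r}{r}$, a second (rescaled) application of the two-variable inequality relates $u$ back to $a$ and $b$.

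Concretely, I would proceed as follows. Assume $abc>0$ (the degenerate case being trivial). Define $u$ as above. By Lemma~\ref{lem:pq} applied with exponents $(p',r)$ to the reals $(u,c)$: since $\frac{u^{p'}}{p'}+\frac{c^r}{r} = \frac{a^p}{p}+\frac{b^q}{q}+\frac{c^r}{r}\leq(1+\delta)abc$, I need $uc \geq (1-O(\delta^{1/2}))$ times that sum in order to conclude $u^{p'}=(1+O(\delta^{1/2}))c^r$; but the hypothesis gives exactly $\frac{u^{p'}}{p'}+\frac{c^r}{r}\leq(1+\delta)abc$, and I still must bring in the unconditional bound $abc\leq uc\cdot(\text{something})$. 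The cleaner route: observe that $\frac{a^p}{p}+\frac{b^q}{q}\geq \left(\tfrac{p}{p'}\right)^{?}\cdots$ — rather, use that by the \emph{unconstrained} two-variable AM--GM with weights $\tfrac{p'}{p},\tfrac{p'}{q}$ (these sum to $1$ since $\tfrac1p+\tfrac1q=\tfrac1{p'}$ means $\tfrac{p'}{p}+\tfrac{p'}{q}=1$), one has $\frac{a^p}{p}+\frac{b^q}{q} = \frac1{p'}\left(\tfrac{p'}{p}a^p+\tfrac{p'}{q}b^q\right)\geq \frac1{p'}\left(a^p\right)^{p'/p}\left(b^q\right)^{p'/q} = \frac1{p'}(ab)^{p'}$, whence $u\geq ab$. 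Therefore $uc\geq abc$, and the hypothesis becomes $\frac{u^{p'}}{p'}+\frac{c^r}{r}\leq(1+\delta)abc\leq(1+\delta)uc$. Lemma~\ref{lem:pq} now yields $u^{p'}=(1+O(\delta^{1/2}))c^r$.

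It remains to separate $a$ and $b$. From the chain above, $\frac{u^{p'}}{p'} = \frac{a^p}{p}+\frac{b^q}{q}\geq\frac1{p'}(ab)^{p'}$, so $u^{p'}\geq(ab)^{p'}$; combined with the hypothesis $\frac{a^p}{p}+\frac{b^q}{q}+\frac{c^r}{r}\leq(1+\delta)abc$ and the now-established $u^{p'}=(1+O(\delta^{1/2}))c^r$, I get $\frac{a^p}{p}+\frac{b^q}{q} = \frac{u^{p'}}{p'} = (1+O(\delta^{1/2}))\frac{c^r}{p'}$, and also $abc\geq(1-O(\delta^{1/2}))\bigl(\tfrac{a^p}{p}+\tfrac{b^q}{q}+\tfrac{c^r}{r}\bigr)$. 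Since $c^r$ is now comparable to $\tfrac{a^p}{p}+\tfrac{b^q}{q}$, substituting back gives $\frac{a^p}{p}+\frac{b^q}{q}\leq(1+O(\delta^{1/2}))ab\cdot c$ with $c=(1+O(\delta^{1/2}))\left(p'(\tfrac{a^p}{p}+\tfrac{b^q}{q})\right)^{1/p'}$; a direct rescaling (replace $b$ by $\lambda b$, $a$ by $a/\lambda$ to normalize) reduces this to the hypothesis of Lemma~\ref{lem:pq} for the pair $(a,b)$ with exponents $(p,q)$, yielding $a^p=(1+O(\delta^{1/2}))b^q$. Chaining the two conclusions gives $a^p=(1+O(\delta^{1/2}))b^q=(1+O(\delta^{1/2}))c^r$, as claimed. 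The main obstacle I anticipate is bookkeeping the error terms through the rescaling in the final step so that $O(\delta^{1/2})$ errors do not inadvertently amplify to $O(\delta^{1/4})$; one must track that $c$ is within a $(1+O(\delta^{1/2}))$ factor of the normalizing constant \emph{before} invoking Lemma~\ref{lem:pq}, which requires noting that near-equality in the AM--GM step $\frac{a^p}{p}+\frac{b^q}{q}\geq\frac1{p'}(ab)^{p'}$ is itself an instance of Lemma~\ref{lem:pq} and hence already gives $a^p=(1+O(\delta^{1/2}))b^q$ essentially for free, short-circuiting the rescaling entirely.
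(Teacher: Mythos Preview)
Your strategy is essentially the paper's: group two of the three terms into a single weighted power mean, use the two-variable AM--GM to manufacture the hypothesis of Lemma~\ref{lem:pq}, rescale exponents, and finish by a second (symmetric) application. The paper groups $b,c$ rather than $a,b$: it sets $1/p+1/p'=1$, defines $d$ by $d^{p'}=\tfrac{p'}{q}b^q+\tfrac{p'}{r}c^r$, uses $\tfrac{a^p}{p}+\tfrac{d^{p'}}{p'}\geq ad$ to deduce $d\leq(1+\delta)bc$, and then observes that this last inequality \emph{is} near-equality in Lemma~\ref{lem:pq} for $(b^{p'},c^{p'})$ with exponents $(q/p',r/p')$, giving $b^q=(1+O(\delta^{1/2}))c^r$ in one stroke; the other relation follows by symmetry. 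Your grouping works just as well, but your second half is over-complicated. The clean way to finish is the sandwich
\[
abc \;\leq\; uc \;\leq\; \frac{u^{p'}}{p'}+\frac{c^r}{r} \;\leq\; (1+\delta)abc,
\]
which yields not only $u\geq ab$ but also $u\leq(1+\delta)ab$; that is exactly near-equality in $\tfrac{p'}{p}a^p+\tfrac{p'}{q}b^q\geq(ab)^{p'}$, i.e.\ Lemma~\ref{lem:pq} for $(a^{p'},b^{p'})$ with exponents $(p/p',q/p')$, giving $a^p=(1+O(\delta^{1/2}))b^q$ directly. No rescaling of $a,b$ or substitution of $c$ back in is needed, and the errors stay at $O(\delta^{1/2})$ throughout.
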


\begin{proof}
We may assume that $abc > 0$, since otherwise the conclusion holds trivially.
Choose exponent $p' \in (1,+\infty)$ such that $1/p + 1/p' = 1$. Let
$$ d = \left(\frac{p'}{q} b^q + \frac{p'}{r} c^r\right)^{1/p'}. $$
Then
$$ \frac{a^p}{p} + \frac{b^q}{q} + \frac{c^r}{r} = \frac{a^p}{p} + \frac{d^{p'}}{p'} \geq ad. $$
From hypothesis it follows that $d \leq (1+\delta)bc$, which can be rewritten as
$$ \frac{x^{q'}}{q'} + \frac{y^{r'}}{r'} \leq (1+\delta)^{p'} xy, $$
where
$$ q' = \frac{q}{p'}, \ \ r' = \frac{r}{p'}, \ \ x = b^{p'}, \ \ y = c^{q'}. $$
Note that $1/q' + 1/r' = 1$. Hence by Lemma \ref{lem:pq} it follows that
$$ x^{q'} = (1 + O(\delta^{1/2})) y^{r'}, $$
which implies that
$$ b^q = (1 + O(\delta^{1/2})) c^r. $$
Similarly, one can also prove that $a^p = (1 + O(\delta^{1/2})) c^r$.
\end{proof}

\begin{lemma}\label{lem:Holder}
Let $p,q,r \in (1,+\infty)$ be exponents with $1/p + 1/q + 1/r = 1$.
Let $a_1,\cdots,a_n$, $b_1,\cdots,b_n$, $c_1,\cdots,c_n$ be non-negative reals such that
$$ \sum_{i=1}^n a_i^p = \sum_{i=1}^n b_i^q = \sum_{i=1}^n c_i^r = 1. $$
Suppose that
$$ \sum_{i=1}^n a_ib_ic_i \geq 1 - \delta $$
for some sufficiently small constant $\delta > 0$. Then we have
$$ a_i^p = (1 + O(\delta^{1/4})) b_i^q = (1 + O(\delta^{1/4})) c_i^r $$
for each $i$ outside an exceptional set $E$ satisfying
$$ \sum_{i \in E} (a_i^p + b_i^q + c_i^r) \ll \delta^{1/2}. $$
\end{lemma}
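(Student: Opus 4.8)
The plan is to reduce Lemma~\ref{lem:Holder} to the three-variable statement in Lemma~\ref{lem:pqr} by a pointwise analysis. By the weighted AM--GM inequality, for each $i$ we have
$$ a_i b_i c_i \leq \frac{a_i^p}{p} + \frac{b_i^q}{q} + \frac{c_i^r}{r}. $$
Summing over $i$ and using the normalizations $\sum a_i^p = \sum b_i^q = \sum c_i^r = 1$ together with $1/p+1/q+1/r=1$, the right side sums to $1$, so the hypothesis $\sum_i a_ib_ic_i \geq 1-\delta$ forces
$$ \sum_{i=1}^n \left( \frac{a_i^p}{p} + \frac{b_i^q}{q} + \frac{c_i^r}{r} - a_ib_ic_i \right) \leq \delta. $$
Each summand is nonnegative, so this is a statement that the ``local AM--GM defect'' is small \emph{on average}. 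The first step is to define, for a threshold $\eta$ to be chosen (something like $\eta = \delta^{1/2}$), the exceptional set $E$ to be the set of indices $i$ for which the $i$-th defect exceeds $\eta \cdot (a_i^p + b_i^q + c_i^r)$, or perhaps more simply where it exceeds $\eta \cdot a_i b_i c_i$; by Markov's inequality applied to the displayed sum, $E$ is small in the appropriate weighted sense.

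\textbf{Applying the pointwise lemma off the exceptional set.} For $i \notin E$, the local defect is at most $\eta \, a_i b_i c_i$, which is precisely the hypothesis of Lemma~\ref{lem:pqr} with the parameter there equal to $\eta$. Hence Lemma~\ref{lem:pqr} yields $a_i^p = (1 + O(\eta^{1/2})) b_i^q = (1+O(\eta^{1/2})) c_i^r$ for each such $i$. Choosing $\eta = \delta^{1/2}$ gives the error term $O(\delta^{1/4})$ claimed in the statement. It remains to verify the size bound $\sum_{i \in E}(a_i^p + b_i^q + c_i^r) \ll \delta^{1/2}$: if one defines $E$ via the defect exceeding $\eta(a_i^p+b_i^q+c_i^r)$, Markov gives $\sum_{i\in E}(a_i^p+b_i^q+c_i^r) \leq \delta/\eta = \delta^{1/2}$ directly. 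One small technical wrinkle is that to invoke Lemma~\ref{lem:pqr} we want the defect bounded by $\eta\, a_ib_ic_i$ rather than by $\eta(a_i^p+b_i^q+c_i^r)$; but on the complement of $E$ the two are comparable since the defect being a small fraction of $a_i^p+b_i^q+c_i^r$ forces $a_ib_ic_i$ itself to be comparable to $a_i^p+b_i^q+c_i^r$ (as the defect is the gap between them). So after adjusting the threshold by a harmless constant factor, the hypothesis of Lemma~\ref{lem:pqr} is met.

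\textbf{Main obstacle and bookkeeping.} The genuinely mathematical content is entirely contained in Lemma~\ref{lem:pqr}, which is already proved; the present lemma is a routine ``globalization'' via Markov's inequality, so there is no serious obstacle. The only place requiring care is the choice of the exceptional set and the translation between ``defect $\leq \eta(a_i^p+b_i^q+c_i^r)$'' and ``defect $\leq \eta' \, a_ib_ic_i$,'' which must be done so that the final exponents come out as $\delta^{1/4}$ in the estimate and $\delta^{1/2}$ in the measure of $E$. One should double-check the degenerate cases where some of $a_i,b_i,c_i$ vanish (the conclusion is then vacuous or trivial pointwise, consistent with Lemma~\ref{lem:pqr}) and that indices contributing zero to all three sums can be freely placed inside or outside $E$ without affecting anything. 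With $\eta = c\,\delta^{1/2}$ for a suitable absolute constant $c$, all the estimates line up and the proof is complete.
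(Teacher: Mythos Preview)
Your proposal is correct and follows essentially the same approach as the paper: sum the pointwise AM--GM defects, define the exceptional set by a $\delta^{1/2}$-threshold, bound its weight via Markov, and apply Lemma~\ref{lem:pqr} on the complement with parameter $\delta^{1/2}$. The only cosmetic difference is that the paper defines $E$ directly as $\{i:\ \tfrac{a_i^p}{p}+\tfrac{b_i^q}{q}+\tfrac{c_i^r}{r}\ge(1+\delta^{1/2})a_ib_ic_i\}$, which immediately gives the hypothesis of Lemma~\ref{lem:pqr} on $E^c$ and, since the defect on $E$ is then $\ge \tfrac{\delta^{1/2}}{1+\delta^{1/2}}(\tfrac{a_i^p}{p}+\tfrac{b_i^q}{q}+\tfrac{c_i^r}{r})$, also yields the Markov bound; this sidesteps the ``technical wrinkle'' you flag about translating between the two thresholds.
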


\begin{proof}
For each $i$ we have
$$ a_ib_ic_i \leq \frac{a_i^p}{p} + \frac{b_i^q}{q} + \frac{c_i^r}{r}. $$
Let $E \subset \{1,2,\cdots,n\}$ be the exceptional set of indices $i$ such that
$$  \frac{a_i^p}{p} + \frac{b_i^q}{q} + \frac{c_i^r}{r} \geq (1+\delta^{1/2}) a_ib_ic_i. $$
Then
$$ \delta \geq \sum_{i=1}^n \left(\frac{a_i^p}{p} + \frac{b_i^q}{q} + \frac{c_i^r}{r} - a_ib_ic_i\right) \gg \delta^{1/2} \sum_{i \in E} \left(\frac{a_i^p}{p} + \frac{b_i^q}{q} + \frac{c_i^r}{r}\right), $$
and hence $\sum_{i \in E} (a_i^p + b_i^q + c_i^r) \ll \delta^{1/2}$. For $i \notin E$, Lemma \ref{lem:pqr} implies that 
$$ a_i^p = (1 + O(\delta^{1/4})) b_i^q = (1 + O(\delta^{1/4})) c_i^r. $$
This concludes the proof.
\end{proof}

\subsection{Near equality in Young's inequality}

In this section, all implied constants are allowed to depend on the exponents $p,q,r$. Before proving the approximate inverse of Young's inequality, we need the following standard result in additive combinatorics.

\begin{lemma}\label{lem:doubling1}
Let $G$ be an abelian group and let $X, Y \subset G$ be finite subsets with $|X| = |Y| = N$. Let $\eps \in (0, 1/20)$ and let $\delta > 0$ be sufficiently small in terms of $\eps$. Let $M \subset X \times Y$ be a subset with $|M| \geq (1-\delta) N^2$. Suppose that the restricted sumset
$$ X+_MY := \{x+y: (x,y) \in M\} $$
has size at most $(1+\eps)N$. Then there exists a coset $x+H$ of a subgroup $H \subset G$ such that $|X \setminus (x+H)| \leq \eps N$ and $|(x+H) \setminus X| \leq 3\eps N$.
\end{lemma}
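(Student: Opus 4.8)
\textbf{Proof proposal for Lemma \ref{lem:doubling1}.}

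The plan is to deduce this from a standard ``almost all of $X$ lies in a coset'' statement by a two-step argument: first clean up the restricted sumset condition to an ordinary small-doubling condition, then apply a Freiman/Kneser-type structure theorem, and finally turn the resulting coset into one that is tight on both sides. Because $|M| \geq (1-\delta)N^2$ with $\delta$ tiny, a counting argument (e.g.\ a double count / Cauchy--Schwarz in the number of $y \in Y$ with $(x,y) \in M$) shows that for all but an $O(\sqrt{\delta})N$-sized set of $x \in X$, the fiber $M_x = \{y : (x,y)\in M\}$ has size $\geq (1-O(\sqrt\delta))N$; restrict attention to this large subset $X' \subseteq X$, and note $|X'+Y| \le |X +_M Y| \le (1+\eps)N$ still holds after enlarging $\eps$ negligibly. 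Now pick any fixed $x_0 \in X'$ with a large fiber: then $X' + y \subseteq X+_MY$ for many $y$, and in particular $|X' \cup (X'+ (y-y'))|$ is controlled for a large set of differences $y-y' \in Y-Y$; more simply, $|X'+Y| \le (1+\eps)N$ together with $|X'|,|Y|$ close to $N$ forces $|X'+X'|$ (or $|Y+Y|$) to be at most $(1+O(\eps))N$ by the Plünnecke--Ruzsa inequality, giving genuine small doubling.

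From a set $S$ with $|S| \ge (1-O(\sqrt\delta))N$ and $|S+S| \le (1+O(\eps))N$, the relevant structural input is the Freiman--Kneser theorem (the ``$3k-4$''-type statement for general abelian groups, valid for doubling constant below $2$): there is a finite subgroup $H$ and a coset $x+H$ with $S \subseteq x+H$ and $|H| \le |S+S| - |S| + |S|$, more precisely $|H| \le (1+O(\eps))N$. Combining $|H| \le (1+O(\eps))N$ with $|S| \ge (1-O(\sqrt\delta))N \ge (1-\eps)N$ (for $\delta$ small in terms of $\eps$) and $S \subseteq x+H$ yields $|(x+H)\setminus S| \le |H| - |S| \le O(\eps) N$. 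Then pass back from $S$ to $X$: since $|X \setminus S| \le O(\sqrt\delta)N \le \eps N$, we get $|X \setminus (x+H)| \le |X\setminus S| \le \eps N$, and $|(x+H)\setminus X| \le |(x+H)\setminus S| + |S \setminus X| \le O(\eps)N + 0$. One has to track the absolute constants here so that the final bounds read exactly $\eps N$ and $3\eps N$; since the hypothesis only asks $\eps < 1/20$ and $\delta$ small in terms of $\eps$, there is room to absorb the $O(\cdot)$ constants by shrinking $\delta$ and using $\eps$-slack, though one may need to be slightly careful and possibly replace $\eps$ by $\eps/C$ at the start and check $3$ is the right final constant.

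The main obstacle is obtaining the clean structural conclusion with the sharp ``coset covers all but $\eps N$'' shape: the restricted nature of the sumset ($M$ rather than all of $X\times Y$, and $X \ne Y$ in general) means one cannot quote the cleanest symmetric small-doubling theorem directly, and the first step --- converting $|X+_MY|\le(1+\eps)N$ with a density-$(1-\delta)$ graph into an honest small-doubling hypothesis for a single large set --- needs the Balog--Szemerédi--Gowers philosophy in a quantitatively lossless form (which is available in this near-$1$-density regime without the usual polynomial losses, precisely because $\delta$ is negligible compared to $\eps$). I would look to the corresponding lemma in \cite{EisnerTao} or \cite{CC}, since the authors cite those papers for the surrounding material, and adapt it; the remaining arguments (Plünnecke--Ruzsa, Freiman--Kneser, and the final bookkeeping) are routine.
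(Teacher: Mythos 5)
Your overall plan matches the paper's: pass from the restricted sumset condition to an honest small-sumset condition for large subsets $X'\subset X$, $Y'\subset Y$ via an ``almost-all'' Balog--Szemer\'edi--Gowers theorem, then invoke a Kneser-type structure theorem, then do bookkeeping. However, there is a genuine error in your first step. You assert that after restricting to the set $X'$ of $x\in X$ with $|M_x|\geq (1-O(\sqrt\delta))N$, one has $|X'+Y|\leq |X+_MY|\leq (1+\eps)N$ ``still holds.'' This inclusion is false: even if every fiber $M_x$ is large, different $x$'s can miss different elements of $Y$, so $X'+Y$ can be much larger than $X+_MY$. This is precisely the nontrivial content that the almost-all BSG theorem supplies --- it is a theorem, not a triviality, that one can pass from a dense bipartite graph with small restricted sumset to subsets $X',Y'$ of relative size $1-\eps$ with $|X'+Y'|$ genuinely small. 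You do acknowledge this later (``needs the Balog--Szemer\'edi--Gowers philosophy in a quantitatively lossless form''), so you seem to know the issue, but the argument as written doesn't hang together: the easy fiber-counting step you lead with cannot replace the BSG input, and you should simply quote the almost-all BSG theorem (the paper cites Shao, Theorem~1.1, and the asymmetric version in CCSW) for the $X',Y'$ with $|X'|,|Y'|\geq(1-\eps)N$ and $|X'+Y'|\leq(1+2\eps)N$.

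A second, more minor point: your detour through Pl\"unnecke--Ruzsa to symmetrize (passing to $|X'+X'|$) is unnecessary and costs constant factors you then have to track. Kneser's theorem is already stated for asymmetric sums: it gives $|X'+Y'|\geq|X'|+|Y'|-|H|$ with $H$ the stabilizer of $X'+Y'$, hence $|H|\geq(1-4\eps)N$ and $|X'+Y'|<2|H|$, forcing $X'+Y'$ to be a single coset of $H$ and therefore $X'$ to lie in a single coset $x+H$. The bookkeeping is then clean: $|X\setminus(x+H)|\leq|X\setminus X'|\leq\eps N$, and $|(x+H)\setminus X|\leq|(x+H)\setminus X'|=|H|-|X'|=|X'+Y'|-|X'|\leq(1+2\eps)N-(1-\eps)N=3\eps N$, giving the stated constants exactly without any $O(\cdot)$ to absorb.
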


\begin{proof}
By an almost-all version of the Balog-Szemeredi-Gowers theorem as in \cite[Theorem 1.1]{Shao} (see also \cite[Theorem 1.1]{ShaoXu} for a version with $G = \Z$ and \cite[Theorem 3.3]{CCSW} for an asymmetric version), one can find subsets $X' \subset X$ and $Y' \subset Y$ such that
$$ |X'| \geq (1-\eps)N, \ \ |Y'| \geq (1-\eps)N, \ \ |X'+Y'| \leq |X+_MY| + \eps N \leq (1+2\eps)N. $$
By Kneser's theorem \cite{Kneser} (see \cite[Theorem 5.5]{TaoVu}), we have
$$ |X'+Y'| \geq |X'| + |Y'| - |H|, $$
where $H \subset G$ is the subgroup defined by
$$ H = \{h \in G: X'+Y'+h = X'+Y'\}. $$
It follows that $|H| \geq (1-4\eps)N$ and hence $|X'+Y'| < 2|H|$.
Since $X'+Y'$ is the union of cosets of $H$, it must be a single coset of $H$, and thus $X'$ is contained in a single coset $x+H$ of $H$. Hence
$$ |X\setminus (x+H)| \leq |X\setminus X'| \leq \eps N $$
and
$$ |(x+H) \setminus X| \leq |(x+H) \setminus X'| = |X'+Y'| - |X'| \leq 3\eps N. $$
\end{proof}

\begin{proposition}\label{prop:HY}
Let $p,q,r \in (1,+\infty)$ be exponents with $1/p + 1/q = 1 + 1/r$.
Let $f,g: \Z\rightarrow\C$ be finitely-supported functions such that $\|f\|_p = \|g\|_q = 1$. Suppose that
$$ \|f*g\|_r \geq 1-\delta $$
for some sufficiently small constant $\delta > 0$. Then there exists a singleton set $\{x_0\}$ for some $x_0 \in \Z$ such that 
$$ \|f - f(x_0)1_{\{x_0\}}\|_p^p \ll \delta^{1/8}. $$
\end{proposition}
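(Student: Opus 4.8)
The plan is to deduce Proposition~\ref{prop:HY} from the near-equality analysis of Young's inequality via the standard route through the triangle inequality, H\"older's inequality, and the additive-combinatorial input of Lemma~\ref{lem:doubling1}. First I would record the proof of Young's inequality that makes the equality cases transparent: writing $h = f*g$, for each $s$ with $h(s) \neq 0$ one has, after normalising, a pointwise application of H\"older in three exponents to the sum $\sum_{a} f(a)g(s-a)$, and summing these over $s$ recovers $\|f*g\|_r \le \|f\|_p\|g\|_q$. The hypothesis $\|f*g\|_r \ge 1-\delta$ therefore forces near-equality in essentially all of these pointwise H\"older inequalities simultaneously, in an $\ell^r$-averaged sense over $s$.

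Next I would feed this into Lemma~\ref{lem:Holder} (the approximate inverse of H\"older's inequality). The upshot should be that for most $s$ (in the sense that the exceptional $s$ contribute $\ll \delta^{c}$ to $\|h\|_r^r$), the three "profiles" $a \mapsto |f(a)|^p$, $a\mapsto|g(s-a)|^q$, and the normalised summand all agree up to multiplicative errors $1+O(\delta^{c'})$ on all but a small-mass exceptional set of $a$'s. Concretely this says: there is a set $M \subset \supp f \times \supp g$ carrying almost all of the $\ell^p \times \ell^q$ mass such that $f(a)g(b)$ depends (up to $1+O(\delta^{c})$) only on $a+b$, for $(a,b) \in M$; equivalently the restricted sumset $(\supp f) +_M (\supp g)$ is small. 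Then Lemma~\ref{lem:doubling1} applies to $X = \supp f$, $Y = \supp g$: it yields a coset $x+H$ with $|X \setminus (x+H)|$ and $|(x+H)\setminus X|$ both small. Because $\|f\|_p = 1$ is fixed while $f$ must be roughly "flat" on its essential support (again from the H\"older equality analysis, $|f(a)|^p$ is roughly constant where it is appreciable), a small support set forces $H$ to be trivial, i.e. $X$ is essentially a single point $\{x_0\}$, and then $\|f - f(x_0)1_{\{x_0\}}\|_p^p \ll \delta^{1/8}$ follows by collecting the error terms.

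I would organise the write-up as: (1) the pointwise H\"older computation and the passage to a family of near-equalities indexed by $s$; (2) invoking Lemma~\ref{lem:Holder} to extract the near-proportionality structure and the restricted small-doubling conclusion, tracking exponents so that the final loss is $\delta^{1/8}$; (3) applying Lemma~\ref{lem:doubling1} and ruling out a nontrivial subgroup $H$ via the mass normalisation, concluding the singleton approximation.

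The main obstacle I anticipate is step~(2): carefully quantifying how "$\ell^r$-averaged near-equality in Young" descends to "pointwise-in-$s$ near-equality in H\"older for a dominant set of $s$", and then chaining Lemma~\ref{lem:Holder}'s per-$s$ conclusion into a single coherent restricted-sumset statement about $f$ and $g$ jointly — one must avoid the exceptional sets proliferating as $s$ varies, which requires a Fubini/double-counting argument to produce the set $M$ with $|M| \ge (1-\delta^{c})|X||Y|$ while keeping the exponents good enough that the bookkeeping closes at $\delta^{1/8}$. A secondary subtlety is verifying that near-equality in the pointwise H\"older inequalities really does pin down $|f|^p$ to be (essentially) constant on the bulk of $X$, which is what lets us convert "small support" into "trivial $H$" rather than merely "bounded-size $H$".
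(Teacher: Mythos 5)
Your plan is essentially the paper's proof: a three-exponent pointwise H\"older computation at each $s$ giving $(f*g)(s)^r \le \sum_y f(s-y)^p g(y)^q$, then Lemma~\ref{lem:Holder} plus a Fubini/double-counting argument to assemble the per-$s$ exceptional sets into a single bad set of small weighted mass, then Lemma~\ref{lem:doubling1} and the fact that $\Z$ has no nontrivial finite subgroups. The one imprecision is that Lemma~\ref{lem:doubling1} is applied not to $\supp f$ and $\supp g$ (which could have many points of negligible weight, ruining the unweighted density hypothesis $|M|\ge(1-\delta)N^2$) but to the "essential supports" $X,Y$ on which $f,g$ are near-constant, exactly as you anticipate in your final paragraph.
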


\begin{proof}
By replacing $f,g$ by $|f|, |g|$, we may assume that $f,g$ takes non-negative real values.
For every $x \in \Z$ we have
$$ (f*g)(x) = \sum_{y\in\Z} f(x-y) g(y) = \sum_{y \in \Z} f(x-y)^{p/r} g(y)^{q/r} \cdot f(x-y)^{(r-p)/r} \cdot g(y)^{(r-q)/r}.  $$
By H\"{o}lder's inequality, we have
\begin{equation}\label{HY-eq1} 
(f*g)(x) \leq \left(\sum_{y \in \Z} f(x-y)^p g(y)^q \right)^{\frac{1}{r}} \left(\sum_{y \in \Z} f(x-y)^p\right)^{\frac{r-p}{pr}} \left(\sum_{y \in \Z} g(y)^q\right)^{\frac{r-q}{qr}}.
\end{equation}
Since $\|f\|_p = \|g\|_q = 1$, it follows that
$$ (f*g)(x)^r \leq \sum_{y \in \Z} f(x-y)^p g(y)^q. $$
Let $E_1 \subset \Z$ be the exceptional set of $x \in \Z$ such that
$$ (f*g)(x)^r \leq (1 - \delta^{1/2}) \sum_{y \in \Z} f(x-y)^p g(y)^q. $$
From hypothesis we have
$$ 1 - (1-\delta)^r \geq \sum_{x \in \Z} \left(\sum_{y \in \Z} f(x-y)^p g(y)^q - (f*g)(x)^r\right) \geq \delta^{1/2} \sum_{x \in E_1} \sum_{y \in \Z} f(x-y)^p g(y)^q $$
and hence
\begin{equation}\label{HY-exceptional1} 
\sum_{(x,y) \in E_1\times \Z} f(x-y)^p g(y)^q \ll \delta^{1/2}.
\end{equation}
For each $x \notin E_1$, we have almost equality in \eqref{HY-eq1}, and hence by Lemma \ref{lem:Holder} applied to the three sequences
$$ a_x(y) = \frac{f(x-y)^{p/r} g(y)^{q/r}}{h(x)^{1/r}}, \ \ b_x(y) = f(x-y)^{(r-p)/r}, \ \ c_x(y) = g(y)^{(r-q)/r}, $$
where
$$ h(x) = \sum_{z \in \Z} f(x-z)^p g(z)^q, $$
we conclude that
\begin{equation}\label{HY-eq2} 
\frac{f(x-y)^pg(y)^q}{h(x)} = (1 + O(\delta^{1/8})) f(x-y)^p = (1 + O(\delta^{1/8})) g(y)^q
\end{equation}
for each $y$ outside an exceptional set $E_2(x)$ satisfying
\begin{equation}\label{HY-exceptional2} 
\sum_{y \in E_2(x)} f(x-y)^p g(y)^q \ll \delta^{1/4}h(x). 
\end{equation}
Define 
$$ E = (E_1 \times \Z) \cup \{(x, y) \in \Z \times \Z: x \notin E_1, y \in E_2(x)\}. $$
Then from \eqref{HY-exceptional1} and \eqref{HY-exceptional2} it follows that
$$ \sum_{(x,y) \in E} f(x-y)^p g(y)^q \ll \delta^{1/2} + \delta^{1/4} \sum_{x \in \Z}h(x) \ll \delta^{1/4}, $$
and \eqref{HY-eq2} holds for every $(x,y) \notin E$.

Now make a change of variables and consider 
$$ E' = \{(x,y) \in \Z \times \Z: (x+y, y) \in E\}. $$
Then
\begin{equation}\label{HY-exceptional'} 
\sum_{(x,y) \in E'} f(x)^p g(y)^q = \sum_{(x,y) \in E} f(x-y)^pg(y)^q \ll \delta^{1/4}, 
\end{equation}
and we have 
\begin{equation}\label{HY-eq3} 
\frac{f(x)^p g(y)^q}{h(x+y)} = (1 + O(\delta^{1/8}))f(x)^p = (1 + O(\delta^{1/8})) g(y)^q 
\end{equation}
for every $(x,y) \notin E'$. Let $X \subset \Z$ be the set of $x \in \Z$ such that 
$$ \sum_{y: (x,y) \in E'} g(y)^q \leq \delta^{1/8}. $$ 
Then from \eqref{HY-exceptional'} it follows that
$$ \delta^{1/4} \gg \sum_{x \notin X} f(x)^p \sum_{y: (x,y) \in E'} g(y)^q \geq \delta^{1/8} \sum_{x \notin X} f(x)^p, $$ 
and hence
\begin{equation}\label{HY-exceptionalX} 
\sum_{x\notin X} f(x)^p \ll \delta^{1/8}.
\end{equation}
For every $x_1, x_2 \in X$, since
$$ \sum_{y: (x_1,y) \in E'} g(y)^q + \sum_{y: (x_2,y) \in E'} g(y)^q \ll \delta^{1/8}, $$
there exists $y \in \Z$ such that $(x_1,y) \notin E'$ and $(x_2,y)\notin E'$. By \eqref{HY-eq3} we have
$$ f(x_1)^p = (1 + O(\delta^{1/8})) g(y)^q, \ \ f(x_2)^p = (1 + O(\delta^{1/8})) g(y)^q. $$
We conclude that there exists a constant $a \in \R$ such that
$$ 
f(x) = (1 + O(\delta^{1/8})) a 
$$
for every $x \in X$. Moreover, since 
$$ 1 = \sum_{x \in \Z} f(x)^p = \sum_{x \in X} f(x)^p + O(\delta^{1/8}) = (1 + O(\delta^{1/8})) a^p |X| + O(\delta^{1/8}), $$
we have
$$ |X| = (1 + O(\delta^{1/8})a^{-p}. $$
By symmetry, we may also conclude the existence of a constant $b \in \R$ such that
$$ g(y) = (1 + O(\delta^{1/8})) b $$
for every $y \in Y$, where $Y \subset \Z$ is a subset satisfying
$$ |Y| = (1 + O(\delta^{1/8})b^{-q}.  $$
We now return to using the first part of \eqref{HY-eq3} for $(x,y) \in M := (X \times Y)\setminus E'$. First note from \eqref{HY-exceptional'} that
$$ \delta^{1/4} \gg \sum_{(x,y) \in (X\times Y) \cap E'} f(x)^p g(y)^q \gg a^p b^q \cdot |(X\times Y) \cap E'| \gg |X|^{-1}|Y|^{-1} \cdot |(X\times Y) \cap E'|, $$
and hence
$$ |M| \geq (1 - O(\delta^{1/4})) |X||Y|. $$
For $(x,y) \in M$, \eqref{HY-eq3} implies that
$$ \frac{a^pb^q}{h(x+y)} = (1 + O(\delta^{1/8}) a^p = (1 + O(\delta^{1/8})) b^q. $$
In particular, since $M$ is non-empty, we have $a^p = (1 + O(\delta^{1/8})) b^q$ and hence $|X| = (1 + O(\delta^{1/8})) |Y|$. Moreover, for $s \in X+_MY$ we have
$$ h(s) = (1  + O(\delta^{1/8})) a^p. $$
Since $\sum_{s \in \Z}h(s) = 1$, we have
$$ 1 \geq \sum_{s \in X+_MY} h(s) = (1 - O(\delta^{1/8}))a^p \cdot |X+_MY|, $$
and hence
$$ |X+_MY| \leq (1 + O(\delta^{1/8})) |X|. $$
We now apply Lemma \ref{lem:doubling1} with $\eps = 1/100$ (say), after possibly shrinking one of $X, Y$ slightly so that $|X| = |Y|$, to conclude that there exists a coset $x_0 + H$ of a subgroup $H \subset \Z$ such that
$$ |X \setminus (x_0+H)| \leq \frac{1}{10}|X|, \ \ |(x_0+H) \setminus X| \leq \frac{1}{10}|X|. $$
The only finite subgroup of $\Z$ is $H = \{0\}$, and hence it must be that $X = \{x_0\}$. The desired conclusions follow immediately from \eqref{HY-exceptionalX}.
\end{proof}

\subsection{Proof of Proposition \ref{prop:upper-bound}}

Let $f: \Z \rightarrow \R$ be a function which is supported on a set of size $n \geq 2$. By replacing $f$ by $|f|$, we may assume that $f$ takes non-negative real values. Let $\delta > 0$ be a sufficiently small absolute constant and let
$$ q = \frac{4}{3 - \log_n(1+\delta)} $$
First consider the case when
$$ \|\widehat{f}\|_4 \leq (1 - \delta) \|f\|_{4/3}. $$
By H\"{o}lder's inequality (see \eqref{eq:q-4/3}), we have
$$ \|f\|_{4/3} \leq n^{3/4-1/q} \|f\|_q = (1+\delta)^{1/4} \|f\|_q. $$
It follows that
$$ \|\widehat{f}\|_4 \leq (1-\delta) (1+\delta)^{1/4} \|f\|_q \leq \|f\|_q. $$

Now suppose that
$$ \|\widehat{f}\|_4 \geq (1 - \delta) \|f\|_{4/3}. $$
By normalization we may assume that $\|f\|_{4/3} = 1$, and thus $\|f*f\|_2 = \|\wh{f}\|_4^2 \geq 1-2\delta$.
By Proposition \ref{prop:HY}, there exists $x_0 \in \Z$ such that
\begin{equation}\label{proof-upper-bound1} 
\|f - f(x_0)1_{\{x_0\}}\|_{4/3} \ll \delta^{1/20}.
\end{equation}
By translation we may assume that $x_0=0$, and we may write $f$ in the form $f = f(0)\delta_0 + g$, where $\delta_0$ is the Kronecker delta function and $g(0) = 0$. Let $x = f(0)$ and $y = \|g\|_{4/3}$. Since $\|f\|_{4/3}=1$ we have
$$ x^{4/3} + y^{4/3} = 1. $$
From \eqref{proof-upper-bound1} we have
$$ y = O(\delta^{1/20}), \ \ x = 1 - O(\delta^{1/20}). $$
In particular we have $y/x \leq 0.01$.
Since $f*f = x^2\delta_0 + 2xg + g*g$, we have
$$ \|f*f\|_2 \leq x\|x\delta_0 + 2g\|_2 + \|g*g\|_2 = x \sqrt{\|x\delta_0\|_2^2 + \|2g\|_2^2} + \|g*g\|_2. $$
Using the inequalities $\|g\|_2 \leq \|g\|_{4/3} = y$ and $\|g*g\|_2 \leq \|g\|_{4/3}^2 = y^2$, we obtain
$$ \|f*f\|_2 \leq x\sqrt{x^2 + 4y^2} + y^2 = x^2 \sqrt{1 + \frac{4y^2}{x^2}} + y^2.  $$
Since $\sqrt{1+\lambda} \leq 1 + \lambda/2$ for $\lambda \geq 0$, it follows that
$$ \|f*f\|_2 \leq x^2\left(1 + \frac{2y^2}{x^2}\right) + y^2 = x^2 + 3y^2. $$
On the other hand, note that
$$ \|f\|_q = (x^q + \|g\|_q^q)^{1/q}. $$
Since $g$ is supported on a set of size $n$, by H\"{o}lder's inequality we have
$$ \|g\|_{4/3} \leq n^{3/4 - 1/q} \|g\|_q = (1+\delta)^{1/4} \|g\|_q. $$
By choosing $\delta>0$ to be small enough, we have $\|g\|_q^q \geq 0.9 \|g\|_{4/3}^q = 0.9y^q$, and hence
$$ \|f\|_q^2 \geq (x^q + 0.9y^q)^{2/q} = x^2\left(1 + \frac{0.9y^q}{x^q}\right)^{2/q}. $$
Since $4/3 \leq q \leq 3/2$, we have $(1+\lambda)^{2/q} \geq 1+\lambda \geq 1+4\lambda^{2/q}$ for $0 \leq \lambda \leq 1/64$. Hence
$$ \|f\|_q^2 \geq x^2\left(1 + 4 \cdot 0.9^{2/q} \cdot \frac{y^2}{x^2}\right) \geq x^2 + 3y^2. $$
It follows that $\|f*f\|_2 \leq \|f\|_q^2$, as desired.

\section{Proof of Theorem \ref{main-thm2}}\label{sec:thm2}

Let $n \geq 3$ be a positive integer and let $I$ be the interval
$$ I = \left\{-\Bigl\lfloor \frac{n-1}{2}\Bigr\rfloor, \cdots, \Bigl\lfloor \frac{n}{2}\Bigr\rfloor\right\} $$
which has length $n$.
In view of Proposition \ref{prop:tool}, it suffices to construct a function $f: I \rightarrow \R$ such that $|\wh{f}\|_4 > \|f\|_q$,
where
$$ q = \frac{4}{\log_n \frac{2n^3+n}{3}}. $$
We take $f = 1_I + \eps\delta_0$ for some small $\eps > 0$, where $\delta_0$ is the Kronecker delta function. Note that $\|\wh{1_I}\|_4 = \|1_I\|_q$, and we will show that the small adjustment from $1_I$ to $f$ swings the inequality in the desired direction.

First note that
$$ \|f\|_q^q = n-1 + (1+\eps)^q = n + q\eps + O(\eps^2). $$
Hence
$$ \|f\|_q^4 = n^{4/q} \left(1 + \frac{q\eps}{n} + O\left(\frac{\eps^2}{n}\right)\right)^{4/q} = n^{4/q} \left(1 + \frac{4\eps}{n} + O\left(\frac{\eps^2}{n}\right)\right). $$
Since $n^{4/q} = (2n^3+n)/3$, it follows that
\begin{equation}\label{thm2-eq1} 
\|f\|_q^4 = \frac{1}{3}(2n^3+n) + \frac{4}{3}(2n^2+1)\eps + O(n^2\eps^2). 
\end{equation}
Now consider the convolution
$$ f*f = 1_I*1_I + 2\eps 1_I + \eps^2\delta_0. $$
We have
\begin{equation*}
\begin{split} 
\|f*f\|_2^2 &= \sum_{a \notin I} 1_I*1_I(a)^2 + \sum_{a \in I\setminus \{0\}} (1_I*1_I(a) + 2\eps)^2 + (1_I*1_I(0) + 2\eps + \eps^2)^2 \\
&= \sum_{a \notin I} 1_I*1_I(a)^2 + \sum_{a \in I} (1_I*1_I(a) + 2\eps)^2 + O(n\eps^2) \\
&= \sum_{a \in \Z} 1_I*1_I(a)^2 + 4\eps \sum_{a \in I} 1_I*1_I(a) + O(n\eps^2)
\end{split}
\end{equation*}
One can compute that
$$ \sum_{a \in \Z}1_I*1_I(a)^2 = E(I) = \frac{1}{3}(2n^3+n) $$
and 
$$ \sum_{a \in I}1_I*1_I(a) = \Bigl\lceil \frac{3n^2}{4}\Bigr\rceil \geq \frac{3n^2}{4} $$
Hence
\begin{equation}\label{thm2-eq2} 
\|f*f\|_2^2 \geq \frac{1}{3}(2n^3+n) + 3n^2\eps + O(n\eps^2). 
\end{equation}
Comparing \eqref{thm2-eq1} with \eqref{thm2-eq2} and noting that
$$ 3n^2 > \frac{4}{3}(2n^2+1) $$
for every $n \geq 3$, we conclude that
$$ \|f*f\|_2^2 > \|f\|_q^4 $$
for sufficiently small $\eps > 0$. This completes the proof.

\bibliographystyle{plain}
\bibliography{biblio}

\end{document}